\documentclass[11pt,twoside]{article}
\usepackage{authblk}
\usepackage[scaled=0.8]{DejaVuSansMono}
\usepackage{emptypage}
\usepackage[a4paper,marginratio={5:5,5:7},width=147mm]{geometry}
\usepackage[T2A,T1]{fontenc}
\usepackage[utf8x]{inputenc}

\usepackage{fancyhdr}
\newcommand{\maybebf}{}

\fancypagestyle{plain}{%
    \fancyhf{}%
    \fancyfoot[C]{\maybebf \thepage}
    
}
\fancypagestyle{intro}{
    \renewcommand{\sectionmark}[1]{\markboth{##1}{}}
    \fancyhf{}
    \fancyhead[LE,RO]{\leftmark}
    \fancyfoot[C]{\maybebf \thepage}

}

\renewcommand{\sectionmark}[1]{%
  \markboth{%
    \ifnum\value{section}>0
      \maybebf{\thesection.}\space
    \fi
    #1%
  }{%
    \ifnum\value{subsection}=0
            \thesection. #1
        \fi%
  }%
}
\renewcommand{\subsectionmark}[1]{%
    \markright{%
        \ifnum\value{subsection}>0
            \thesubsection. #1
        \fi%
    }%
}
\usepackage[english]{babel}
\usepackage{graphicx}
\usepackage{amsmath}
\usepackage{amssymb}
\usepackage{amsthm}
\usepackage{mathtools}
\usepackage[final]{microtype}
\usepackage{tikz}
\usetikzlibrary{arrows.meta,arrows,decorations.pathmorphing,shapes,knots,hobby,quotes,positioning}
\usepackage{asymptote}
\usepackage{tikz-cd}
\usepackage{parskip}
\usepackage{float}
\usepackage{caption}
\usepackage{subcaption}
\usepackage{enumitem}
\usepackage{adjustbox}
\usepackage{xfakebold}
\usepackage{mdframed}
\usepackage{booktabs}
\usepackage{multirow}
\usepackage{siunitx}
\usepackage{nicematrix}
\NiceMatrixOptions{xdots/shorten=0.87em, renew-dots}
\usepackage{diagbox}
\usepackage[calc,showdow,english]{datetime2}
\DTMnewdatestyle{mydateformat}{%
}

\usepackage{comment}
\usepackage{import}
\usepackage{standalone}
\usepackage{xcolor}

\definecolor{zg}{gray}{0.6}

\usepackage{csquotes}
\usepackage[doi=false,isbn=false,
maxbibnames=99,backend=biber,style=numeric,sorting=nyt]{biblatex}
\usepackage[unicode]{hyperref}

\definecolor{hlinkcol}{HTML}{A00000}
\definecolor{hcitecol}{HTML}{308030}
\hypersetup{
    colorlinks,
    linkcolor={teal!100!black},
    citecolor={green!50!black},
    urlcolor={blue!0!black}
}

\theoremstyle{plain}
\newtheorem{thm}{Theorem}[section]

\newcommand{\thistheoremname}{}
\newtheorem{genericthm}[thm]{\thistheoremname}

\newtheorem*{genericthm*}{\thistheoremname}
\newenvironment{namedthm*}[1]
  {\renewcommand{\thistheoremname}{#1}%
   \begin{genericthm*}}
  {\end{genericthm*}}

\newtheorem{prop}[thm]{Proposition}
\newtheorem{lemma}[thm]{Lemma}
\newtheorem{cor}[thm]{Corollary}
\newtheorem{conj}[thm]{Conjecture}
\newtheorem*{thm*}{Theorem}
\newtheorem*{thm20*}{Theorem \ref*{thm:main20}}
\newtheorem*{lemma*}{Lemma}
\newtheorem*{prop*}{Proposition}
\newtheorem*{conj*}{Conjecture}
\newtheorem*{cor*}{Corollary}

\theoremstyle{definition}
\newtheorem{defin}[thm]{Definition}

\newtheorem*{defin*}{Definition}

\theoremstyle{remark}
\newtheorem{remark}[thm]{Remark}

\counterwithin{figure}{section}
\counterwithin{table}{section}

\newcommand{\surj}{\twoheadrightarrow}

\newcommand{\id}{\mathrm{id}}
\newcommand{\ZZ}{\mathbb Z}

\newcommand{\RR}{\mathbb R}

\newcommand{\til}{\widetilde}
\newcommand{\blt}{*}
\newcommand{\poly}[1]{\overline{#1}}
\tikzset{
dot/.style = {circle, fill, minimum size=#1,
              inner sep=0pt, outer sep=0pt},
dot/.default = 5pt
}

\DeclarePairedDelimiter\abs{\lvert}{\rvert}%
\DeclarePairedDelimiter\norm{\lVert}{\rVert}%

\makeatletter
\let\oldabs\abs
\def\abs{\@ifstar{\oldabs}{\oldabs*}}
\let\oldnorm\norm
\def\norm{\@ifstar{\oldnorm}{\oldnorm*}}
\makeatother

\DeclareMathOperator{\image}{im}
\DeclareMathOperator{\coker}{coker}
\DeclareMathOperator{\Hom}{Hom}
\DeclareMathOperator{\ab}{ab}
\DeclareMathOperator{\rk}{rk}

\DeclareMathOperator{\grcd}{grcd}

\newcommand{\fbseries}{
\unskip\setBold\aftergroup\unsetBold\aftergroup\ignorespaces}
\makeatletter
\newcommand{\setBoldness}[1]{\def\fake@bold{#1}}
\makeatother

\tikzset{
    rots/.style={anchor=south, rotate=90, inner sep=.5mm}
}

\newcommand\restr[2]{{%
  \left.\kern-\nulldelimiterspace %
  #1 %
  \vphantom{\big|} %
  \right|_{#2} %
  }}

\author{Jacopo G. Chen\thanks{Shanghai Institute for Mathematics and Interdisciplinary Sciences (SIMIS), Shanghai, 200433, China. Email: \href{mailto:jacopo.chen@simis.cn}{\texttt{jacopo.chen@simis.cn}}}}
\title{On a twisted Singer conjecture}

\begin{document}

\DTMsetdatestyle{mydateformat}
\date{}
\maketitle

\begingroup
\centering\small
\textbf{Abstract}\par\smallskip
\begin{minipage}{\dimexpr\paperwidth-9.7cm}
According to the Singer conjecture, the $L^2$-Betti numbers of a closed aspherical manifold are expected to vanish outside the middle dimension. In this paper, we study a natural analog of the Singer conjecture concerning the vanishing of \emph{twisted} $L^2$-Betti numbers outside the lower middle dimension. These invariants can be thought of as $L^2$-Betti numbers of an infinite cyclic cover, depending on a first cohomology class. This is related to a previous conjecture by the author, which connects the twisted $L^2$-Euler characteristic to the Thurston norm, and which we prove for a class of closed arithmetic hyperbolic manifolds.
\end{minipage}
\par\endgroup

\section{Introduction}

The starting point for this article is the following observation from a previous paper by the author~\cite{twi-l2}.

\begin{lemma}
    Assume the Singer conjecture for all even-dimensional closed aspherical manifolds. Let $M$ be a closed aspherical $(2n+1)$-manifold and let $\phi\colon \pi_1(M) \surj \ZZ$ be a virtually fibered cohomology class (i.e.\ its image in a finite cover represents a fibration over the circle). Then $b_i^{(2)}(\til M; \phi) = 0$ for all $i \ne n$.
\end{lemma}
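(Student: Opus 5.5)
We reduce the statement to the fiber and then apply the Singer conjecture in even dimension $2n$. Since twisted $L^2$-Betti numbers scale multiplicatively under finite covers, we may first pass to a finite cover $\hat M \to M$ in which the pullback of $\phi$ is represented by a fibration. We need the standard fact that $b_i^{(2)}(\til M;\phi)=0$ for some $i$ if and only if the same holds for the restriction of $\phi$ to a finite-index subgroup. This follows because restriction to a subgroup of index $d$ multiplies the twisted $L^2$-Betti numbers by $d$; if the restricted class is not primitive, we also rescale by its divisibility. So without loss of generality $M$ fibers over $S^1$ with fiber a closed $2n$-manifold $F$ and monodromy $f\colon F\to F$, and $\phi$ is the induced map $\pi_1(M)=\pi_1(F)\rtimes_{f_*}\ZZ\surj\ZZ$, possibly a nonzero multiple of it.

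Next, $F$ is aspherical, because the infinite cyclic cover $M_\phi=F\times\RR$ is aspherical. We then identify the twisted $L^2$-Betti numbers with the $L^2$-Betti numbers of $\pi_1(F)$. By the previous work \cite{twi-l2}, $b_i^{(2)}(\til M;\phi)$ is the von Neumann dimension over $\mathcal N(\pi_1 M)$ of the homology of $C_*(\til M)$ twisted by $\phi$. For a fibered class I would invoke the computation that these numbers equal $b_i^{(2)}(\til F)$, the ordinary $L^2$-Betti numbers of the universal cover of $F$ with respect to $\pi_1(F)$. This is the analogue of the Lück-type statement that $L^2$-invariants of $\til M$ twisted by $\phi$ measure the $L^2$-homology of the cover $M_\phi\simeq F$, computed over the kernel $\ker\phi=\pi_1(F)$. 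Concretely, I would use that the twisted chain complex is induced from the $\pi_1(F)$-chain complex $C_*(\til F)$ via $\pi_1(F)\le\pi_1(M)$, and that the dimension is preserved under induction. In the non-primitive case $\phi=k\cdot\phi_0$ one gets an extra factor $k$, which does not affect vanishing.

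Finally, $F$ is a closed aspherical manifold of even dimension $2n$, so by the assumed Singer conjecture $b_i^{(2)}(\til F)=0$ for $i\ne n$. Hence $b_i^{(2)}(\til M;\phi)=0$ for $i\ne n$.

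The main obstacle is the identification in the second step, namely making precise that twisting by a fibered $\phi$ computes exactly the $\pi_1(F)$-$L^2$-Betti numbers of the fiber, including the normalization. I would handle it by choosing a CW structure on $M$ as the mapping torus of a cellular map of $F$. The twisted cellular chain complex is then a mapping cone of $1-t\cdot f_*$ on $C_*(\til F)$ tensored up. I would then compute its dimension using that $\phi$-twisting turns $t$ into a variable with respect to which $1-t f_*$ becomes a weak isomorphism on the relevant pieces. This leaves the $L^2$-homology of $F$, with degrees shifted so that the vanishing range $i\ne n$ is preserved.
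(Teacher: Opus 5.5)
Your proposal is correct and is the standard argument behind this lemma; the paper does not reprove the lemma but quotes it from \cite{twi-l2}. The argument runs: pass to a finite cover in which $\phi$ fibers (twisted $L^2$-Betti numbers are multiplicative under finite covers, so vanishing is unaffected); identify $b_i^{(2)}(\til M;\phi)$ with the $L^2$-Betti numbers over $\ker\phi=\pi_1(F)$ of the infinite cyclic cover $N\cong F\times\RR\simeq F$, which is exactly the convention $b_i^{(2)}(\til M;\phi)=b_i^{(2)}(\til N)$ used in the proof of Proposition~\ref{prop:mayer-vietoris}; and apply the Singer conjecture to the closed aspherical $2n$-manifold $F$.

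Since that identification is just homotopy invariance, your mapping-cone detour in the last paragraph is unnecessary. Its remark about a degree shift should also be dropped. After restricting to $\ker\phi$, the map $1-tf_*$ is injective and its cone computes the homology of $F$ in the \emph{same} degree. A genuine shift would have moved the vanishing range away from $i\ne n$.
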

As explained in~\cite[Section~7.5.1]{twi-l2}, this naturally leads to a conjecture:
\begin{conj}\label{conj:thurston5}
    Let $M$ be a closed aspherical $(2n+1)$-manifold. Then $b_i^{(2)}(\til M; {-}) \equiv 0$ for all $i \ne n$. Moreover, $\chi^{(2)}(\til M; {-}) = (-1)^n\cdot b_n^{(2)}(\til M; {-})$ is a seminorm up to sign.
\end{conj}

In order to work towards a proof, we first consider a generalization of the second part of this conjecture:

\begin{conj}\label{conj:main}
    Let $G$ be a finitely generated group satisfying the Atiyah conjecture and let $C_\blt$ be a finitely generated free $L^2$-acyclic $\ZZ G$-chain complex. Then, for every $n\ge 0$, the twisted $L^2$-Betti number $b_n^{(2)}(C_\blt; {-})$ is a polyhedral seminorm.
\end{conj}

This statement essentially appears as Conjecture~6.7 in~\cite{seminorm-conj}, and can also be considered as a generalization of Kielak's \emph{single polytope theorem}:

\begin{thm}[{see~\cite[Theorem~3.14]{kielak-bns}}]
    Let $D$ be a skew field and let $H$ be a finitely generated free abelian group. Let $A$ be a square matrix over the group ring $D[H]$, which embeds in its Ore division ring $\mathcal D$. Then the Newton polytope $P(A) \coloneqq P(\det A \otimes \mathcal D)$ is either empty or a single polytope.
\end{thm}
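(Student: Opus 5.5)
We need to prove Kielak's single polytope theorem: for a square matrix $A$ over $D[H]$, with $\mathcal D$ the Ore localization, the Dieudonné determinant $\det A \in \mathcal D^\times_{\ab}\cup\{0\}$ has a Newton polytope that is either empty (when $\det A = 0$) or a single polytope, rather than merely a formal difference of polytopes.

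\emph{Setup.} Every nonzero element of $\mathcal D$ has the form $pq^{-1}$ with $p,q\in D[H]$. Since $P(pq^{-1}) = P(p)-P(q)$ in the Grothendieck group of polytopes, $P$ induces a homomorphism from $\mathcal D^\times_{\ab}$. A priori, then, $P(A)$ is only a formal difference of polytopes. It is a single polytope exactly when the corresponding polyhedral ``seminorm'' $\phi\mapsto \operatorname{width}_\phi P(A)$ is convex, with the natural extension of this criterion to the full support function. So the plan is to show that, for each character $\phi\colon H\to\RR$, the $\phi$-width is a genuine seminorm, and to use this to rule out a virtual difference.

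\emph{Key steps.}

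\textbf{Step 1: reduce to a single direction.} For a fixed $\phi\colon H\to \ZZ$, and more generally to $\RR$ via Novikov rings, consider the Novikov-type completion $\widehat{D[H]}_\phi$, consisting of formal series whose support is bounded below in $\phi$. Every nonzero element of $D[H]$ has a ``$\phi$-leading term'', and $\mathcal D$ embeds in a skew field of such series. The minimal $\phi$-value of $P(x)$ is the $\phi$-valuation $\nu_\phi(x)$ of $x$, and this valuation is additive and well defined on $\mathcal D^\times_{\ab}$. I will prove that the support function $h_\phi(P(A)) := \min_{P(A)}\phi = \nu_\phi(\det A)$. Writing $P(A)=P_1-P_2$, the goal becomes $h(P_1)-h(P_2)$ being the support function of a polytope.

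\textbf{Step 2: express $\nu_\phi(\det A)$ as a concave (tropical) quantity.} Over the valued division ring, I will perform Gaussian elimination with pivoting by minimal valuation. This should give $\nu_\phi(\det A)=\min$ over some combinatorial data of sums of valuations of entries, in the spirit of a tropical determinant. The shape is then $\phi\mapsto\min_\sigma(\text{linear in }\phi)$, which is a minimum of linear functions in $\phi$ and hence the support function of a polytope.

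This is the main obstacle. Over a noncommutative $D$, cancellation in leading terms can make $\nu_\phi(\det A)$ strictly larger than the tropical determinant, so the naive formula fails. What I would try first is to avoid explicit formulas and argue by convexity instead. I would show that $\phi\mapsto \nu_\phi(\det A)$ is concave on each ray-cone and piecewise linear, and then use the fact that a piecewise linear function on $\operatorname{Hom}(H,\RR)$ is a support function exactly when it is concave.

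To get concavity, I would induct on the rank of $H$. Write $H=H'\times\ZZ$, so $D[H]=D[H'][t^{\pm}]$, a twisted Laurent ring over the Ore domain $D[H']$. Then $\mathcal D$ is the Ore field of $\mathcal D'[t^{\pm}]$, a principal ideal domain. Over this Euclidean-type ring, $A$ can be brought to diagonal (Smith) form by elementary operations, which do not change $\det$. The diagonal entries are Laurent polynomials in $t$ with coefficients in $\mathcal D'$. Each such coefficient is a single polytope by induction, and the polytope of a polynomial $\sum c_i t^i$ with single-polytope coefficients is the convex hull of the lifts. The resulting determinant is a product of polynomials, and products of single polytopes are Minkowski sums, hence single polytopes.

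\textbf{Step 3: conclude.} One subtlety remains: the Smith form over $\mathcal D'[t^{\pm}]$ introduces denominators from $\mathcal D'$. I will control these by clearing them, replacing $A$ by $A\cdot\operatorname{diag}(q_i)$ with $q_i\in D[H']$. This shifts $P$ by the single polytopes $P(q_i)$. The result is then $P(A)+\sum P(q_i)=$ a single polytope, with all $P(q_i)$ single. Finally, cancellation in the Grothendieck monoid of polytopes (Rådström cancellation) shows that $P(A)$ is itself a single polytope, provided the Minkowski sum is divisible appropriately. Rådström alone does not give that divisibility, and this is where the concavity of the $\phi$-valuation from Step 2 is genuinely needed, applied direction by direction.
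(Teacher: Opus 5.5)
The paper does not prove this theorem; it quotes it from Kielak and uses it as a black box, so your argument has to stand on its own. As written it has a genuine gap: the whole content of the theorem is the concavity of $\psi\mapsto\nu_\psi(\det A)$, and this is never established. Step~2 announces concavity and defers it to an induction on $\operatorname{rk} H$. Step~3 then concedes that a relation $P(A)+X=Y$ with $X,Y$ single does not force $P(A)$ to be single, and appeals back to ``the concavity from Step~2''. That is circular.

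The inductive step also rests on a false claim. The diagonal entries of a Smith form over $\mathcal D'[t^{\pm}]$ have coefficients that are arbitrary elements of the Ore field $\mathcal D'$. The inductive hypothesis concerns Dieudonn\'e determinants of square matrices over $D[H']$ and says nothing about such coefficients, which in general have genuinely virtual polytopes. For instance, $(1-h)^{-1}$ with $1\neq h\in H'$ has polytope the formal inverse of the segment $P(1-h)$. Right-multiplying $A$ by $\operatorname{diag}(q_i)$ does not remove these denominators, since they arise in the division steps.

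Your opening reformulation is also only correct in its support-function form. Convexity of the width $\psi\mapsto\psi[P(A)]$ does not characterize single polytopes: if $\Delta'$ is the point reflection of a triangle $\Delta$, then $\Delta-\Delta'$ has width identically $0$ but is not single.

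What is missing is a way to turn information in one direction at a time into single-ness, and for this no induction on rank is needed.

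\emph{Pseudo-division for every $\phi$.} For every primitive $\phi$, write $D[H]=D[\ker\phi][t^{\pm}]$ and triangularize $A$ over $D[H]$ itself. To lower the $t$-degree of an entry $f$ using an entry $g$ of the same column, with leading coefficients $f_m,g_n$ and $m\ge n$, use the Ore condition in $D[\ker\phi]$ to find nonzero $c,d$ with $cf_m=dg_n$. Multiply the row of $f$ on the left by $c$ and subtract $dt^{m-n}$ times the row of $g$. Transvections, permutations and powers of $t$ do not change $P$ in $\mathcal P_T(H)$. The diagonal of the resulting triangular matrix $T$ is nonzero because $A$ is invertible over $\mathcal D$. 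Hence $P(A)+X_\phi=Y_\phi$ with $X_\phi=\sum P(c)\in\mathfrak P_T(\ker\phi)$ and $Y_\phi=\sum_i P(T_{ii})\in\mathfrak P_T(H)$.

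\emph{Wall-crossing criterion.} Fix any representation $P(A)=Y-Z$. Its support function is piecewise linear on the common refinement of the two normal fans, so it is concave if and only if it is concave across each wall. Take $\psi$ in the relative interior of a wall lying in $\{\psi(v)=0\}$. Then $F_\psi(P(A))$ is a virtual segment $[0,av]-[0,bv]$, and concavity there means $a\ge b$. Choose $\phi$ with $\phi(v)\neq 0$ and apply the additive functional $\phi[\cdot]$ to $F_\psi(P(A))=F_\psi(Y_\phi)-F_\psi(X_\phi)$. Since $F_\psi(X_\phi)$ is $\phi$-flat, this gives $(a-b)|\phi(v)|=\phi[F_\psi(Y_\phi)]\ge 0$.

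Your ``direction by direction'' remark points this way. However, it needs such a relation for a family of $\phi$ detecting every edge direction, not only for the single splitting $H=H'\times\ZZ$ used in your induction.
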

Indeed, let $G$ be a finitely generated, torsion-free group satisfying the Atiyah conjecture, with free abelianization $\ab\colon G \twoheadrightarrow H$, and let $A \in \mathrm M(n, \mathbb{Z}G)$ be a matrix that becomes invertible over the Linnell skew field $\mathcal D(G)$. Let $\mathrm{el}(r_A)$ be the $\mathbb{Z}G$-chain complex supported in degrees $0$ and $1$, with first differential $({-})\cdot A$. Then, $A$ can be regarded as a matrix over $\mathcal D(\ker \ab)[H]$, and the thickness function of its Newton polytope is precisely the twisted $L^2$-Betti number $b_0^{(2)}(\mathrm{el}(r_A); {-})$. The single polytope theorem states that this is a seminorm.

In this article, we will give a proof of Conjecture~\ref{conj:main} and, as a consequence, show some results about the vanishing of twisted $L^2$-Betti numbers. This completes the proof of Conjecture~\ref{conj:thurston5} for a class of arithmetic hyperbolic manifolds arising from congruence subgroups:
\begin{thm}\label{thm:main-final}
    Let $M$ be a closed congruence arithmetic hyperbolic $(2n+1)$-manifold of simplest type. Then $(-1)^n \cdot \chi^{(2)}(\widetilde M; -) = b_n^{(2)}(\widetilde M; -)$ is a seminorm, and the other twisted $L^2$-Betti numbers $b_i^{(2)}(\widetilde M; -)$, with $i\ne n$, are identically zero.
\end{thm}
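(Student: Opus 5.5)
The plan has two parts: the seminorm statement, which should follow from Conjecture~\ref{conj:main}, and the vanishing statement, which should follow from virtual fibering together with the Singer conjecture in even dimensions. For the first part, $M$ is hyperbolic of odd dimension $2n+1$, so $\pi_1(M)$ is $L^2$-acyclic; indeed all $L^2$-Betti numbers of odd-dimensional closed hyperbolic manifolds vanish (Dodziuk). The group is torsion-free and cocompact arithmetic of simplest type. By Bergeron--Haglund--Wise it is virtually special, and hence satisfies the Atiyah conjecture (Schreve, via residually torsion-free elementary amenable/RFRS). So the cellular chain complex $C_\blt(\til M)$ is a finitely generated free $L^2$-acyclic $\ZZ G$-complex over a group satisfying the Atiyah conjecture. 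Once Conjecture~\ref{conj:main} is proved, every $b_i^{(2)}(\til M;{-})$ is a polyhedral seminorm. The twisted Euler characteristic is $\sum(-1)^i b_i^{(2)}$, so once the off-middle terms vanish, $(-1)^n\chi^{(2)} = b_n^{(2)}$ is a seminorm.

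For the vanishing, I would use the Lemma of the introduction in an unconditional form. Since $G$ is virtually RFRS, Agol's virtual fibering criterion (in Kielak's form for higher dimensions) shows that the classes $\phi$ which become fibered on some finite cover form an open set of $H^1(M;\RR)$. Moreover, twisted $L^2$-Betti numbers scale under finite covers, so it suffices to work in a fixed finite cover. The key point is that the congruence hypothesis lets me avoid the full Singer conjecture. For a fibered class the fiber $F$ is a closed aspherical $2n$-manifold, and $b_i^{(2)}(\til M;\phi)$ equals $b_i^{(2)}(\til F)$ with the appropriate normalization. I then need these to vanish for $i\ne n$. Here I would use that $\pi_1(F)$ is a normal subgroup of the lattice and compute via the von Neumann dimension over the kernel. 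The input I would try first is the vanishing of $L^2$-cohomology for congruence lattices outside the middle degree, due to Bergeron--Clozel type results (or Lück's approximation combined with limit multiplicity for towers of congruence covers of the fiber). This step is the main obstacle. I expect the actual argument to need $\mathcal{D}(K)$-homology bounds for $K=\ker\phi$ in low degrees (surface-like estimates), obtained perhaps from $F$ being an aspherical manifold with a normal subgroup structure, plus Poincaré duality for the twisted invariants ($b_i^{(2)}(\til M;\phi)=b_{2n-i}^{(2)}(\til M;\phi)$) to handle degrees above $n$ symmetrically.

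Finally, I would extend from the fibered cone to all classes by a continuity/polyhedrality argument. Each $b_i^{(2)}(\til M;{-})$ with $i\ne n$ is a polyhedral seminorm by the first step. Such a seminorm is continuous and homogeneous, so if it vanishes on a nonempty open cone it is linear-zero there. A polyhedral seminorm vanishing on an open set is identically zero, because a seminorm is convex and $\geq 0$, and a convex function vanishing on an open set with $p(-x)=p(x)$ forces the polytope to be a point. For this I need the fibered cone to be nonempty, which holds whenever $b_1>0$ in the chosen finite cover. Such a cover exists by virtual specialness, and passing to it does not affect vanishing. If $\phi=0$ everything is trivial. Combining, $b_i^{(2)}(\til M;{-})\equiv 0$ for $i\ne n$, giving the theorem.
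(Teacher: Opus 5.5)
Your first step (virtual specialness by Bergeron--Haglund--Wise, the Atiyah conjecture by Schreve, then Corollary~\ref{cor:main}) matches the paper. So does your last step: a seminorm vanishing on an open cone, or on a spanning set, vanishes identically. The gap is the middle step. You never show, for a spanning set of classes $\phi$, that $b_i^{(2)}(\widetilde M;\phi)=0$ for $i\ne n$, and the fibration route fails at two points.

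\textbf{Virtual fibering is not available.} No virtual fibering theorem is known for closed hyperbolic manifolds of dimension $\ge 5$. Kielak's criterion for RFRS groups, and its higher-finiteness refinements, only yield \emph{algebraic} fibrations: a finite-index subgroup with an epimorphism onto $\ZZ$ whose kernel has some finiteness property. This is not a fibration of a finite cover over $S^1$ with a closed aspherical $2n$-manifold as fiber. Also, the claim that $b_1>0$ makes the fibered cone nonempty is false already in dimension $3$. The paper itself treats high-dimensional virtual fibering as a difficult open problem.

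\textbf{The fiber would still need Singer.} Even granting a fiber $F$, the vanishing of $b_i^{(2)}(\widetilde F)$ for $i\ne n$ is precisely the even-dimensional Singer conjecture for $F$. That is the hypothesis of the introductory Lemma that the theorem is supposed to remove. Bergeron--Clozel-type vanishing, limit multiplicities and L\"uck approximation concern congruence lattices and their finite-index towers. But $\pi_1(F)$ is an infinite-index normal subgroup of the lattice, so these results say nothing about it. You flag this as the main obstacle, and it remains unresolved.

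\textbf{The missing idea.} Give up on fibers and use hypersurfaces that are themselves closed hyperbolic manifolds. The congruence hypothesis enters through Bergeron--Millson--Moeglin (Proposition~\ref{prop:bmm}): $H^1(M;\mathbb Q)$ is spanned by Poincar\'e duals of immersed totally geodesic hypersurfaces. These become embedded in suitable finite covers by separability, and they are $\pi_1$-injective.

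Proposition~\ref{prop:mayer-vietoris} then gives $b_i^{(2)}(\widetilde M;\phi)\le b_i^{(2)}(\widetilde F)$ for \emph{any} $\pi_1$-injective embedded hypersurface $F$ dual to $\phi$, fibered or not. The argument:
\begin{enumerate}
\item Cut the infinite cyclic cover along a lift of $F$ into two one-ended halves.
\item Finiteness of the twisted Betti numbers, together with the deck transformation, lets you push finitely many generating cycles into either half. Hence both inclusion-induced maps to the whole cover are surjective.
\item The Mayer--Vietoris connecting maps therefore vanish, and counting $\mathcal D(\ker\phi)$-dimensions yields the inequality.
\end{enumerate}

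Since $F$ is a closed hyperbolic $2n$-manifold, Dodziuk gives $b_i^{(2)}(\widetilde F)=0$ for $i\ne n$. Multiplicativity under finite covers, plus your seminorm argument applied to a basis instead of an open cone, completes the proof.
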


Our argument for the vanishing outside the lower middle dimension relies on estimating the twisted $L^2$-Betti numbers by the ordinary $L^2$-Betti numbers of totally geodesic hypersurfaces (see Proposition~\ref{prop:mayer-vietoris}), which are abundant in such manifolds by results of Bergeron--Millson--Moeglin~\cite{virt-geod}.

We note that, during the writing of this paper, a proof of Conjecture~\ref{conj:main} was given by Jaikin-Zapirain, Kudlinska and Sánchez-Peralta~\cite{jzksp}. Our proof makes use of different techniques, involving manipulations of matrices over noncommutative principal ideal domains and of their Newton polytopes, and may be of independent interest.

\subsection{Acknowledgments}
The author would like to thank his postdoc mentor Xiaolong Hans Han and his former doctoral advisor Bruno Martelli for helpful comments. He is also grateful to Andrei Jaikin-Zapirain for an interesting discussion.
The author is partially supported by the start-up grants at Shanghai Institute of Mathematics and Interdisciplinary Sciences.

\subsection{Structure of the paper}
In Section~\ref{sec:prelim}, we recall preliminary notions about integral polytopes and crossed products, and introduce the \emph{greatest right common divisor} (GRCD) of a matrix, which we use in Section~\ref{sec:tri-ineq} to prove Conjecture~\ref{conj:main}. In Section~\ref{sec:dual-poly}, we give a description of a polytope dual to the twisted $L^2$-Betti number seminorm, via matrix GRCDs.
Finally, in Section~\ref{sec:twisted-singer}, we prove Theorem~\ref{thm:main-final}.
\section{Preliminaries}\label{sec:prelim}
In this section, we shall recall a few technical notions that we will need in the following.

\subsection{Integral polytopes}
We start by giving a few definitions about integral polytopes, mostly following Kielak~\cite{kielak-bns}.
In the following, let $H$ be a finitely generated free abelian group.
\begin{defin}
    The \emph{integral polytope monoid} $\mathfrak P(H)$ is the monoid of all (compact) polytopes in $H\otimes \mathbb R$ that are convex hulls of nonempty subsets of $H$, under the Minkowski sum. Its Grothendieck group, called the \emph{integral polytope group}, is denoted by $\mathcal P(H)$, and consists of all formal differences of integral polytopes.
\end{defin}

Since the Minkowski sum is cancellative, the integral polytope monoid embeds in its Grothendieck group: $\mathfrak P(H) \subseteq \mathcal P(H)$.

We also define $\mathfrak P_T(H)$, $\mathcal P_T(H)$ to be the quotients of $\mathfrak P(H)$, $\mathcal P(H)$ by the translation action of $H$. It is easy to see that the Minkowski sum descends to this quotient, and that $\mathcal P_T(H)$ is the Grothendieck group of $\mathfrak P_T(H)$. This gives an inclusion $\mathfrak P_T(H) \subseteq \mathcal P_T(H)$. We will call \emph{single polytopes} the elements of the images of $\mathfrak P(H), \mathfrak P_T(H)$ in $\mathcal P(H), \mathcal P_T(H)$ respectively. While we will mostly consider polytopes up to translation, all the definitions in the remainder of this subsection apply equally well to both $\mathfrak P(H)$ and $\mathfrak P_T(H)$.

\begin{defin}
    Given $\phi\colon H \to \mathbb R$ and $P \in \mathfrak P_T(H)$, let $\phi[P]$ denote the length of the interval $\phi(P_0)$, for $P_0$ any representative. This expression, also called the \emph{thickness function} of $P$, is well defined, is additive in $P$ and is a seminorm in $\phi$, taking integer values on integer characters. It naturally extends to $\mathcal P_T(H)$ as a difference of two seminorms.
\end{defin}

\begin{defin}[{\cite[Definition~4.2]{funke}}]
    Let $H$ be a finitely generated free abelian group. A subgroup $K \le H$ is \emph{pure} if it is the intersection of $H$ with a vector subspace of $H \otimes \RR$, or equivalently, if it is a direct summand of $H$.
\end{defin}
We notice that the intersection of a family of pure subgroups is again a pure subgroup.
\begin{prop}\label{prop:inters}
    Let $K, K'$ be two pure subgroups of $H$. Then $\mathcal P_T(K)$, $\mathcal P_T(K')$ and $\mathcal P_T(K\cap K')$ naturally embed into $\mathcal P_T(H)$, and $\mathcal P_T(K) \cap \mathcal P_T(K') = \mathcal P_T(K\cap K')$.
\end{prop}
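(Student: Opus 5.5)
The plan is to use support functions, which turn Minkowski sums into sums of functions and translations into linear functions. Write $V=K\otimes\RR$, $V'=K'\otimes\RR$ and $W_0=(K\cap K')\otimes\RR = V\cap V'$.

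\textbf{The embeddings.} The inclusion $K\hookrightarrow H$ sends integral polytopes in $K\otimes\RR$ to integral polytopes in $H\otimes\RR$. It respects Minkowski sums and translations, so it induces a homomorphism $\mathcal P_T(K)\to\mathcal P_T(H)$. To see it is injective, suppose $P-Q$ (with $P,Q\in\mathfrak P(K)$) becomes trivial in $\mathcal P_T(H)$. Then $P=Q+h$ for some $h\in H$, using cancellativity of the Minkowski sum. Taking a vertex of $P$ and the corresponding vertex of $Q+h$ (for a generic functional), $h$ is a difference of two points of $K$, so $h\in K$. Hence $P-Q$ is already trivial in $\mathcal P_T(K)$. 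The same argument applies to $K'$ and to $K\cap K'$, the latter being pure. The inclusion $\mathcal P_T(K\cap K')\subseteq\mathcal P_T(K)\cap\mathcal P_T(K')$ is then obvious.

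\textbf{Reformulation of the reverse inclusion.} For a virtual polytope $x=P-Q$, consider its support function $f_x=h_P-h_Q$ on $\Hom(H,\RR)$, where $h_P(\phi)=\max\phi(P)$. Changing representatives or translating changes $f_x$ by a linear function. Conversely, $f_x$ modulo linear functions determines $x\in\mathcal P_T(H)$, because support functions determine polytopes and the Minkowski sum is cancellative. Moreover, $x\in\mathcal P_T(K)$ implies that $f_x$ can be chosen to depend only on $\phi|_V$.

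\textbf{Integral splitting.} Since $K\cap K'$ is a direct summand of $K$ and of $K'$, choose integral complements. Write $K=(K\cap K')\oplus L_1$ and $K'=(K\cap K')\oplus L_2$. Then $V=W_0\oplus W_1$ and $V'=W_0\oplus W_2$ with $W_i=L_i\otimes\RR$, and $W_0+W_1+W_2$ is a direct sum. Extend this to $H\otimes\RR=W_0\oplus W_1\oplus W_2\oplus W_3$, which need only be rational, since we will work with functionals. Accordingly, coordinatize $\phi$ as $(a_0,a_1,a_2,a_3)$ by restriction to the $W_i$.

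\textbf{The key step.} Let $x\in\mathcal P_T(K)\cap\mathcal P_T(K')$. Then, after fixing representatives, $f(a_0,a_1)=g(a_0,a_2)+\ell(\phi)$ for some linear $\ell$, where $f$ comes from $P,Q\subset V$ and $g$ from polytopes in $V'$.
\begin{itemize}
\item Comparing the cases $a_2=0$ and $a_2$ arbitrary, with $a_0,a_1$ fixed, shows that $g(a_0,a_2)-g(a_0,0)$ is linear in $a_2$. Symmetrically, $f(a_0,a_1)-f(a_0,0)=\lambda(a_1)$ for some linear $\lambda$, which is linear in $a_1$.
\item Hence, modulo linear functions, $f$ agrees with $\phi\mapsto f(a_0,0)$.
\item The latter is $h_{p(P)}-h_{p(Q)}$, where $p\colon V\to W_0$ is the projection along $W_1$. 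This holds because the support function of $P$ evaluated on functionals killing $W_1$ is the support function of $p(P)$.
\end{itemize}
Since the splitting $K=(K\cap K')\oplus L_1$ is integral, $p(P)$ and $p(Q)$ are integral polytopes in $K\cap K'$. Therefore $x=p(P)-p(Q)$ lies in $\mathcal P_T(K\cap K')$.

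\textbf{Expected main obstacle.} The hardest part is the step above showing that a function depending only on $(a_0,a_1)$ and, up to linear terms, only on $(a_0,a_2)$ is affine in $a_1$. One must be careful that the linear correction $\ell$ also involves $a_3$ and $a_0$. The plan is to fix $a_0$ and $a_3$ and vary $a_1$ and $a_2$ independently, then use the convexity-difference structure of support functions only to ensure everything is well defined. The integrality of $p$, which relies on purity, is the other point to check.
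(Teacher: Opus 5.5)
Your proof is correct, but it takes a different route from the paper's.

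\emph{The paper's route.} The paper's proof is a few lines and rests on Funke's structure theorem \cite[Theorem~4.5]{funke}. That theorem says $\mathcal P_T(H)$ is free abelian on a basis $\mathcal B$ of single polytopes, with $\mathcal B\cap\mathfrak P_T(L)$ a basis of $\mathcal P_T(L)$ for every pure $L\le H$. The intersection $\mathcal P_T(K)\cap\mathcal P_T(K')$ is then the span of $\mathcal B\cap\mathfrak P_T(K)\cap\mathfrak P_T(K')=\mathcal B\cap\mathfrak P_T(K\cap K')$. The embeddings are taken as implicit in that framework.

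\emph{Your route.} You argue from scratch with support functions, which turn Minkowski differences into differences of support functions modulo linear functions. The hypothesis becomes an identity $f(a_0,a_1)=g(a_0,a_2)+\ell$. Setting $a_2=a_3=0$, and then also $a_1=0$, shows that $f(a_0,a_1)-f(a_0,0)$ is linear in $a_1$. Purity enters only through the integral complement $L_1$, so the projection $p$ maps $K$ into $K\cap K'$ and gives the explicit representative $p(P)-p(Q)$. You also prove the embedding claims directly.

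\emph{Comparison.} Your version is self-contained and gives an explicit preimage in $\mathcal P_T(K\cap K')$. The paper's version is shorter and rests on a stronger structural fact, a basis compatible with all pure subgroups. From that fact the intersection formula for arbitrary families of pure subgroups, used in Section~\ref{sec:dual-poly}, is immediate. Your argument also yields this by iteration, since intersections of pure subgroups are pure.

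\emph{Two small remarks.} The step you flagged as the main obstacle is routine: it is exactly the two substitutions above. The one point you leave implicit is in the converse direction, that support functions modulo linear functions determine $x\in\mathcal P_T(H)$. There you need the translation vector $v$ in $P+p(Q)=p(P)+Q+v$ to be integral. This follows from the same vertex comparison you used for injectivity, and in fact gives $v\in K$.
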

\begin{proof}
    By~\cite[Theorem~4.5]{funke}, the group $\mathcal P_T(H)$ is free abelian with a basis $\mathcal B$ of single polytopes in $\mathfrak P_T(H)$, and for every pure subgroup $L \le H$, $\mathcal B \cap \mathfrak P_T(L) = \mathcal B \cap \mathcal P_T(L)$ is a basis of $\mathcal P_T(L)$. The inclusion $\mathcal P_T(K\cap K') \subseteq \mathcal P_T(K) \cap \mathcal P_T(K')$ is clear. Conversely,
    \begin{align}
        \mathcal P_T(K) \cap \mathcal P_T(K') &= \operatorname{span}_\ZZ(\mathcal B \cap \mathfrak P_T(K) \cap \mathfrak P_T(K') )
        \\ &= \operatorname{span}_\ZZ(\mathcal B \cap \mathfrak P_T(K \cap K')) = \mathcal P_T(K\cap K').
    \end{align}
\end{proof}

\begin{defin}
    A polytope $P \in \mathfrak P_T(H)$ is called \emph{$\phi$-flat} if $\phi[P] = 0$.
\end{defin}
Note that a polytope in $\mathfrak P_T(H)$ is $\phi$-flat if and only if it is an element of $\mathfrak P_T(\ker \phi)$, while an element of $\mathcal P_T(H)$ is a difference of $\phi$-flat polytopes if and only if it is an element of $\mathcal P_T(\ker \phi)$.

\begin{defin}
    We define a partial order on $\mathfrak P_T(H)$, where $P \le Q$ whenever there exists a polytope $R \in \mathfrak P_T(H)$ such that $P + R = Q$.
\end{defin}

\begin{defin}
    Given a polytope $P \in \mathfrak P_T(H)$ and a linear map $\phi \colon H\otimes \mathbb R \to \mathbb R$, choose a representative $P_0$ in $\mathfrak P(H)$, and define the \emph{$\phi$-face} of $P$ as
    \begin{equation}
        F_\phi(P) \coloneqq \{p \in P \mid \phi(p) = \min_{q \in P_0} \phi(q)\}.
    \end{equation}
    This defines homomorphisms $F_\phi\colon \mathfrak P_T(H) \to \mathfrak P_T(H)$ and $F_\phi\colon \mathcal P_T(H) \to \mathcal P_T(H)$.
\end{defin}

It is easy to see that, for any $\phi \colon H\otimes \mathbb R \to \mathbb R$ and any $P, Q \in \mathfrak P_T(H)$, $P \le Q$ implies $F_\phi(P) \le F_\phi(Q)$ and $\phi[P] \le \phi[Q]$. Let us now prove two technical lemmas.

\begin{lemma}\label{lemma:two-flat}
Let $P \in \mathfrak P_T(H)$, and let $\alpha, \beta \in \Hom(H, \ZZ)$ be linearly independent characters. Assume that every edge (one-dimensional face) of $P$ is either $\alpha$-flat or $\beta$-flat. Then $F_\alpha(P) + F_\beta(P) = P + F_\alpha(F_\beta(P))$, and hence $P \in \mathcal P_T(\ker \alpha) + \mathcal P_T(\ker \beta)$.
\end{lemma}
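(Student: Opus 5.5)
The plan is to prove the identity $F_\alpha(P) + F_\beta(P) = P + F_\alpha(F_\beta(P))$ by comparing support functions. Two polytopes agree if and only if their support functions $h_Q(\psi) = \min_{q \in Q} \psi(q)$ agree for every linear $\psi\colon H\otimes\RR \to \RR$. This suffices because $h$ is additive under Minkowski sum and determines a convex polytope. Since the identity is translation invariant, I will fix representatives in $\mathfrak P(H)$ throughout. Once the identity holds, the second claim is immediate. $F_\alpha(P)$ and $F_\alpha(F_\beta(P))$ are $\alpha$-flat, hence lie in $\mathfrak P_T(\ker\alpha)$, and $F_\beta(P)$ lies in $\mathfrak P_T(\ker\beta)$. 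So in $\mathcal P_T(H)$ we get
\begin{equation}
P = F_\alpha(P) - F_\alpha(F_\beta(P)) + F_\beta(P) \in \mathcal P_T(\ker\alpha) + \mathcal P_T(\ker\beta).
\end{equation}

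The first step is to reduce to the plane. Edges are either $\alpha$-flat or $\beta$-flat, so every edge direction lies in $\ker\alpha \cup \ker\beta$. Since $P$ is connected through its edges and the two kernels are hyperplanes, I would first show that $P = F_\alpha(P) + F_\beta(P) - F_\alpha F_\beta(P)$ by a more geometric route. Consider the linear map $\pi = (\alpha,\beta)\colon H\otimes\RR \to \RR^2$. Every edge maps to a horizontal or vertical segment, or to a point. Hence $\pi(P)$ is a convex polygon whose edges are all axis-parallel, that is, a rectangle $[a_0,a_1]\times[b_0,b_1]$.

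The second step is to compute the support function. For generic $\psi$, the minimizing vertex $v_\psi$ of $P$ can be reached from any vertex by a $\psi$-decreasing edge path. I will use this to show that $h_P(\psi)$ decomposes as follows. Take the vertex $w$ minimizing $\psi$ on the face $F_\alpha F_\beta(P)$, which lies over the corner $(a_0,b_0)$. The claim is
\begin{equation}
h_P(\psi) - h_{F_\alpha F_\beta P}(\psi) = \bigl(h_{F_\alpha P}(\psi) - h_{F_\alpha F_\beta P}(\psi)\bigr) + \bigl(h_{F_\beta P}(\psi) - h_{F_\alpha F_\beta P}(\psi)\bigr).
\end{equation}
To prove it, consider the normal cone of $P$ at $v_\psi$, or equivalently a monotone edge path from $w$ to $v_\psi$. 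Each edge is either $\alpha$-flat or $\beta$-flat. I would argue that the path can be reordered: first the $\beta$-flat edges staying inside $F_\alpha(P)$, then the $\alpha$-flat edges inside $F_\beta(P)$. This would give a rearrangement showing that $v_\psi$ minus $w$ splits as the sum of a displacement realized in $F_\alpha(P)$ and one realized in $F_\beta(P)$. A cleaner way to state the same step is to show that the normal fan of $P$ equals the common refinement of the fans of $F_\alpha(P)$ and $F_\beta(P)$, both pulled back appropriately, and that the vertices add.

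The main obstacle is making this reordering rigorous in higher dimensions. The argument must show that $P$ literally equals $F_\alpha(P) + F_\beta(P)$ modulo $F_\alpha F_\beta(P)$, rather than only having matching widths in the $\alpha$ and $\beta$ directions. The first approach I would try is induction on $\dim P$. For any direction $\psi$, the face $F_\psi(P)$ again has only $\alpha$- or $\beta$-flat edges. By induction, $F_\psi$ applied to both sides agrees, since $F_\psi$ is a homomorphism and commutes suitably with $F_\alpha$ and $F_\beta$ for generic $\psi$. Two polytopes all of whose proper faces in every generic direction agree have the same vertices, hence are equal. The base case of dimension at most $1$ is a direct check: a segment is either $\alpha$-flat, giving $F_\alpha(P)=P$ and $F_\beta(P)=F_\alpha F_\beta(P)$, or $\beta$-flat, which is symmetric. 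Care is needed with the commutation $F_\psi F_\alpha = F_{\alpha+\epsilon\psi}$-type identities, and with non-generic $\psi$. For those I would use lexicographic minimization: $F_\psi F_\alpha(P) = F_\alpha F_\psi(P)$ is false in general, but $F_\alpha(P)$ and $F_\alpha F_\psi(P)$ are related via faces for $\psi$ close to $\alpha$. Handling this is where I expect the work to lie.
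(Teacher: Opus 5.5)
\textbf{There is a genuine gap: the identity itself is never proved.} Your deduction of $P\in\mathcal P_T(\ker\alpha)+\mathcal P_T(\ker\beta)$ from the identity is correct. So is the observation that $\pi(P)$ is an axis-parallel rectangle, which is also the paper's first step. But the identity $F_\alpha(P)+F_\beta(P)=P+F_\alpha(F_\beta(P))$ is the entire content of the lemma, and none of your three routes reaches it.
\begin{itemize}
\item The displayed support-function equation is the identity rewritten via additivity of $h$, and the normal-fan statement is another reformulation of it.
\item The edge-path reordering is asserted without argument, and it is muddled as stated: edges lying in $F_\alpha(P)$ are $\alpha$-flat, not $\beta$-flat. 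More seriously, permuting the edge vectors of a $\psi$-monotone path gives no control over whether the partial sums stay in $P$. Nor does it explain why the $\alpha$-flat part should end at the $\psi$-minimal vertex of $F_\alpha(P)$, which is exactly what has to be proved.
\item The induction on dimension fails where you suspect. For generic $\psi$ the face $F_\psi(P)$ is a vertex, so the inductive hypothesis says nothing. For non-generic $\psi$ it only decomposes $F_\psi(P)$ in terms of $F_\alpha F_\psi(P)$ and $F_\beta F_\psi(P)$, which you cannot compare with $F_\psi F_\alpha(P)$ and $F_\psi F_\beta(P)$. Also, knowing the identity on all proper faces does not control what happens over the interior of the rectangle.
\end{itemize}

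\textbf{The missing idea is to work fibrewise over the rectangle.} Normalise $\min\alpha(P)=\min\beta(P)=0$, so that $\pi(P)=[0,a]\times[0,b]$, and put $P(x,y)=P\cap\pi^{-1}(x,y)$. The paper proves the parallelogram law $P(0,0)+P(x,y)=P(x,0)+P(0,y)$ for all $(x,y)$. Taking the union over $(x,y)$ then gives the identity at once, because $F_\alpha(P)=\bigcup_y P(0,y)$, $F_\beta(P)=\bigcup_x P(x,0)$ and $F_\alpha(F_\beta(P))=P(0,0)$.

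The parallelogram law is proved in two steps.
\begin{itemize}
\item Cut the rectangle along the coordinates of the vertices of $P$; the pieces still satisfy the edge hypothesis. Telescoping reduces to the case where every vertex lies over a corner. Then $P$ is the convex hull of the four corner fibres, and the central fibre is $\mathrm{conv}\bigl(\tfrac12(P(0,0)+P(a,b))\cup\tfrac12(P(a,0)+P(0,b))\bigr)$.
\item If the two sums differ, the central fibre has a vertex $\tfrac12(p+q)$ lying in only one of the two halves, with $p,q$ in opposite corner fibres. This forces $p$ and $q$ to span an edge of $P$ over a diagonal of the rectangle. Such an edge is neither $\alpha$- nor $\beta$-flat, a contradiction.
\end{itemize}
Strictly, $\tfrac12(p+q)$ lies in the relative interior of an edge or of a $2$-face. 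A $2$-face would project isomorphically onto the whole rectangle and be a parallelogram, which again puts $p+q$ in the other sum.

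\textbf{How this fits your support-function language.} The parallelogram law says exactly that, for each $\psi$, the convex piecewise linear function $f_\psi(x,y)=\min\psi(P(x,y))$ is separable, $f_\psi=f_1(x)+f_2(y)$. Your displayed equation becomes $\min f_\psi+f_\psi(0,0)=\min_y f_\psi(0,y)+\min_x f_\psi(x,0)$, which is then immediate. Separability can also be seen directly. The edges of the graph of $f_\psi$ are parallel to images of edges of $P$ under $(\alpha,\beta,\psi)$, so its domains of linearity are rectangles. Hence $\partial_y f_\psi$ never jumps across vertical walls and $\partial_x f_\psi$ never jumps across horizontal ones.
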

\begin{proof}
The statement is invariant under affine transformations; we will prove it for an arbitrary (real) polytope $P \subseteq \RR^k$, assuming that $\alpha, \beta$ are the projections to the first and second coordinates, and that $\min \alpha(P) = \min \beta(P) = 0$.

Let $\pi = (\alpha, \beta)\colon P \twoheadrightarrow \RR^2$ be the projection to the first two coordinates. Then $Q \coloneqq \pi(P)$ is a polytope in $\RR^2$ with edges parallel to the coordinate axes, and hence it is a rectangle, say $Q = [0, a] \times [0,b]$; the cases $a = 0$ or $b = 0$ are trivial, so we shall assume $a,b > 0$. Moreover, the fibers $P(x, y) \coloneqq \pi^{-1}(x, y) \subseteq \RR^2\times \RR^{k-2}$ are also nonempty polytopes for $0\le x \le a, 0 \le y \le b$.

We claim that $P(0,0) + P(x,y) = P(x,0) + P(0,y)$ for all $(x,y) \in Q$. If true, then
\begin{align}
    F_\alpha(F_\beta(P)) + P &= P(0,0)+\bigcup_{(x,y) \in Q} P(x,y)
\\  &= \bigcup_{(x,y) \in Q} (P(x,y) + P(0,0))
\\  &= \bigcup_{(x,y) \in Q} (P(x,0) + P(0,y))
\\  &= \bigcup_{x \in [0,a]} \bigcup_{y \in [0,b]} (P(x,0) + P(0,y))
\\  &= \bigg(\bigcup_{y \in [0,b]} P(0,y)\bigg) + \bigg(\bigcup_{x \in [0,a]} P(x, 0)\bigg)
\\  &= F_\alpha(P) + F_\beta(P).
\end{align}

Let us prove the claim. We can assume $a=x, b=y$ up to replacing $P$ with $\pi^{-1}([0,x]\times [0,y])$. Now let $\{0 = x_0 < x_1 < \dots < x_n = x\}$ and $\{0 = y_0 < y_1 < \dots < y_m = y\}$ be the first and second coordinates of the vertices of $P$. Each polytope $P_{ij} \coloneqq \pi^{-1}([x_i, x_{i+1}] \times [y_j,y_{j+1}])$, $0\le i < n$, $0\le j < m$ satisfies the hypothesis about edges, and its vertices project down to the corners of $\pi(P_{ij})$. Assume the claim is true for all $P_{ij}$. Then $P(x_i,y_j) + P(x_{i+1},y_{j+1}) - P(x_{i+1},y_j) - P(x_i,y_{j+1})=0$ for all $i,j$. Summing over all $i,j$ yields $P(0,0) + P(x,y) = P(x,0) + P(0,y)$, as desired.

Hence, it suffices to prove the claim when every vertex of $P$ projects to a corner of $Q$. For the sake of contradiction, and by symmetry, we may assume $P(0,0) + P(x,y) \supseteq P(x,0) + P(0,y)$. Hence, there exists a vertex of the left hand side not in the right hand side. In other words, there exist vertices $p \in P(0,0)$ and $q \in P(x,y)$ such that $p+q\not\in P(x,0) + P(0,y)$. Then the convex hull of $P(x,0) \cup P(0,y)$ does not intersect the segment $pq$. However, $P$ is the convex hull of $P(x,0) \cup P(0,y) \cup P(0,0) \cup P(x,y)$, so $pq$ must be an edge of $P$, which is absurd, since it is not $\alpha$-flat nor $\beta$-flat. Hence the claim is true, and the proof is complete.
\end{proof}

\begin{lemma}\label{lemma:magic}
    Let $P \in \mathfrak P_T(H)$ be a polytope, let $\alpha, \beta \in \Hom(H, \ZZ)$ be linearly independent characters, and let $\phi \in \langle \alpha, \beta\rangle_\ZZ$. Then
    \begin{equation}
        \phi[P] = \frac 12 \sum_{\substack{\psi \in \langle \alpha, \beta \rangle_\ZZ \\ \psi \text{\emph{ primitive}}}} \phi[F_\psi(P)].
    \end{equation}
\end{lemma}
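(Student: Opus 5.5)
The plan is to reduce the statement to a planar computation by projecting along the two characters, and then to read the sum as a perimeter count for a convex lattice polygon.

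\textbf{Reduction to the plane.} Let $\pi = (\alpha,\beta)\colon H\otimes\RR \to \RR^2$. Every $\psi \in \langle\alpha,\beta\rangle_\ZZ$ has the form $\psi = s\alpha + t\beta = \ell_{(s,t)}\circ\pi$, where $\ell_{(s,t)}(x,y) = sx+ty$; the same holds for $\phi$. Two facts then follow directly from the definitions, with $Q \coloneqq \pi(P_0)$ for a representative $P_0$.
\begin{itemize}
\item $\phi[R] = \ell_\phi[\pi(R)]$ for every polytope $R$.
\item $\pi(F_\psi(P)) = F_{\ell_\psi}(Q)$. Indeed, $\psi$ is minimised on $P_0$ exactly at the preimage of the set where $\ell_\psi$ is minimised on $Q$.
\end{itemize}
Hence both sides of the identity depend only on $Q$, which is a polytope in $\RR^2$ with vertices in $\ZZ^2$. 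Linear independence of $\alpha,\beta$ identifies $\langle\alpha,\beta\rangle_\ZZ$ with the dual lattice $\ZZ^2$, and primitivity of $\psi$ becomes primitivity of $(s,t)$. So it suffices to prove
\begin{equation}
\ell[Q] = \tfrac12\sum_{v\in\ZZ^2 \text{ primitive}} \ell[F_v(Q)]
\end{equation}
for a lattice polygon $Q$ and a linear form $\ell$ on $\RR^2$.

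\textbf{Identifying the nonzero terms.} For $v \ne 0$, the face $F_v(Q)$ is either a vertex, contributing $0$, or an edge of $Q$. Suppose first that $Q$ is two-dimensional. Each edge $e$ is a lattice segment, so its inward normal direction is rational. That direction therefore contains exactly one primitive integral vector $v_e$, and $F_{v_e}(Q) = e$. No other primitive $v$ gives $e$. The sum is therefore finite and equals $\sum_{e} \ell[e]$, the total $\ell$-variation along $\partial Q$.

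\textbf{Counting the boundary variation.} Since $Q$ is convex, $\ell$ restricted to the closed curve $\partial Q$ increases monotonically from $\min_Q \ell$ to $\max_Q \ell$ along one arc and decreases back along the other. This holds even if one or two edges are $\ell$-flat; such edges contribute $0$. Hence $\sum_e \ell[e] = 2\,\ell[Q]$, which gives the claim.

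\textbf{Degenerate cases.} If $Q$ is a point, both sides vanish. If $Q$ is a segment, the only faces that are not points arise from the two primitive normals $\pm v$ to $Q$, and each of these faces is $Q$ itself. The right-hand side is then $\tfrac12\cdot 2\,\ell[Q]$, as required.

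\textbf{Expected obstacle.} I expect the main care to be needed in the reduction step. One must check that taking faces commutes with $\pi$, which uses that $\psi$ factors through $\pi$. One must also check that every relevant normal direction is rational, so that it is represented by exactly one primitive element of $\langle\alpha,\beta\rangle_\ZZ$. Everything else is elementary planar convexity.
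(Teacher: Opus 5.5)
Your proof is correct and is essentially the paper's argument: project via $(\alpha,\beta)$ to a lattice polygon in $\RR^2$, where each edge is picked out by exactly one primitive inward normal, and $\phi$ runs once up and once down $\phi(P)$ along the boundary, so the edge contributions sum to $2\phi[P]$. You make explicit what the paper leaves implicit: that faces commute with the projection, the degenerate cases, and that primitivity must be read as primitivity of $(s,t)$ in the lattice $\langle\alpha,\beta\rangle_\ZZ$, which is the reading under which the identity holds even when this lattice is not saturated in $\Hom(H,\ZZ)$.
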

\begin{proof}
    Up to taking the image of $P$ under $(\alpha, \beta)$, we can assume $H \simeq \mathbb Z^2$, with $\alpha, \beta$ the two projections to $\mathbb Z^2$. Now the statement follows from the simple observation that the map $\psi \colon \partial P \to \psi(P)$ is a double cover on the interior of the segment $\psi(P)$.
\end{proof}

This lemma is at the core of the following result, which will in turn enable the proof of the main theorem.

\begin{prop}\label{prop:main}
    Let $H\simeq \ZZ^k$ and let $\poly D, \poly E, \poly B_1, \dots, \poly B_k \in \mathfrak P_T(H)$. Let $\phi_1, \dots, \phi_k\colon H \to \mathbb Z$ be linearly independent characters, and let $a_1, \dots, a_k \in \ZZ_{>0}$. Let $a_1\phi_1 + \dots + a_k\phi_k = c\phi$, with $c \in \ZZ_{>0}$ and $\phi$ primitive. Assume that:
    \begin{itemize}
        \item $\poly E$ is $\phi$-flat;
        \item $\poly D \le \poly B_i + \poly E$ for $i = 1, \dots, k$.
    \end{itemize}
    Then $c\phi[\poly D] \le a_1\phi_1[\poly B_1] + \dots + a_k\phi_k[\poly B_k]$.
    
    If equality holds, then $\poly D - F_\phi(\poly D) + \poly B_i \in \mathcal P_T(\ker \phi_i)$ for all $i$.
    
\end{prop}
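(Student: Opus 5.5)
The plan is to reduce the inequality to an edge-by-edge comparison, using Lemma~\ref{lemma:magic} to express thicknesses as sums over faces, and then to read off the equality case from where the estimates are sharp. All thicknesses involved only see the images of the polytopes under $(\phi_1,\dots,\phi_k)\colon H \to \ZZ^k$. Since the $\phi_i$ are linearly independent, we first replace $H$ by $\ZZ^k$ with the $\phi_i$ as coordinate projections. In this model $\poly E$ lies in a translate of the hyperplane $\ker\phi$. Note that the naive estimate
\begin{equation}
c\phi[\poly D]\le \sum_i a_i\phi_i[\poly D]\le \sum_i a_i(\phi_i[\poly B_i]+\phi_i[\poly E])
\end{equation}
is too weak, because $\phi_i[\poly E]\neq 0$. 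The point of the argument is therefore to pass to faces on which $\poly E$ collapses to a point.

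\textbf{Case $k=2$.} Here the plane $\langle\phi_1,\phi_2\rangle$ contains $\phi$, and $\poly E$ is a segment parallel to $\ker\phi$. By Lemma~\ref{lemma:magic},
\begin{equation}
c\phi[\poly D]=\tfrac12\sum_{\psi}c\phi[F_\psi(\poly D)],
\end{equation}
where $\psi$ runs over primitive characters of the plane. The terms with $\psi=\pm\phi$ vanish, since $F_{\pm\phi}(\poly D)$ is $\phi$-flat. For $\psi\ne\pm\phi$, the face $F_\psi(\poly E)$ is a point, so the hypothesis gives $F_\psi(\poly D)\le F_\psi(\poly B_i)$ up to translation, for each $i$. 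Each $F_\psi(\poly D)$ is an edge or a point, so we can apply the triangle inequality on it:
\begin{equation}
c\phi[F_\psi(\poly D)]\le a_1\phi_1[F_\psi(\poly D)]+a_2\phi_2[F_\psi(\poly D)]\le a_1\phi_1[F_\psi(\poly B_1)]+a_2\phi_2[F_\psi(\poly B_2)].
\end{equation}
Summing over $\psi$, we are allowed to reinsert the nonnegative terms $a_i\phi_i[F_{\pm\phi}(\poly B_i)]$. Applying Lemma~\ref{lemma:magic} once more, now to $\poly B_i$ and $\phi_i$, gives $a_1\phi_1[\poly B_1]+a_2\phi_2[\poly B_2]$.

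\textbf{General $k$.} I would try to run the same argument with Lemma~\ref{lemma:magic} replaced by a higher-dimensional Cauchy-type formula: thickness in direction $\phi$ as an average over two-dimensional planes through $\phi$, or as a sum over edges of a shadow boundary. The delicate point is that the $\phi_i$ need not lie in the chosen plane. A fallback is induction on $k$. One groups $a_1\phi_1+a_2\phi_2=c'\phi'$ and seeks an intermediate polytope $\poly B'$ with $\poly D\le \poly B'+\poly E'$, where $\poly E'$ is $\phi'$-flat, and $c'\phi'[\poly B']\le a_1\phi_1[\poly B_1]+a_2\phi_2[\poly B_2]$. I expect this reduction to be the main obstacle: producing such a $\poly B'$, or otherwise making the face-wise estimate work when the $\phi_i$ are not coplanar with the summation plane. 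My first attempt would be to take a generic $\beta$ and sum over faces $F_\chi$ with $\chi\in\langle\phi,\beta\rangle$. For generic $\chi$, the face $F_\chi(\poly E)$ is a vertex of $\poly E$, which preserves the key step $F_\chi(\poly D)\le F_\chi(\poly B_i)$. The remaining difficulty is to control the sums $\sum\phi_i[F_\chi(\poly B_i)]$ by $\phi_i[\poly B_i]$ from above.

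\textbf{Equality case.} Equality forces every inequality above to be sharp. First, on every edge $e$ of $\poly D$ that is not $\phi$-flat, $|\sum_i a_i\phi_i(e)|=\sum_i a_i|\phi_i(e)|$, so all $\phi_i(e)$ have the same sign. Second, $\phi_i[F_\chi(\poly D)]=\phi_i[F_\chi(\poly B_i)]$, and the reinserted terms $\phi_i[F_{\pm\phi}(\poly B_i)]$ vanish. I would then feed these edge conditions into Lemma~\ref{lemma:two-flat}, applied to $\poly D$ and $\poly B_i$ with the pair $(\phi,\phi_i)$. The aim is to conclude that the parts of $\poly D$ and $\poly B_i$ that are not $\phi_i$-flat cancel, up to the bottom face $F_\phi(\poly D)$, which gives $\poly D-F_\phi(\poly D)+\poly B_i\in\mathcal P_T(\ker\phi_i)$. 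Here Proposition~\ref{prop:inters} is used to intersect the resulting memberships in $\mathcal P_T(\ker\alpha)+\mathcal P_T(\ker\beta)$ across the different directions.
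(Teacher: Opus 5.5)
\textbf{The $k=2$ case is fine; general $k$ and the equality case are not proved.}

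Your $k=2$ argument is correct and is exactly the paper's chain of inequalities. The paper routes the middle step through the cofactor $P_i=\poly B_i+\poly E-\poly D$, which is what the equality case needs. Your preliminary projection along $(\phi_1,\dots,\phi_k)$ is worth keeping: it gives the $k=2$ inequality for $H$ of any rank, and that is the form used in Section~\ref{sec:tri-ineq}.

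For $k\ge 3$ there is a genuine gap, and you have located it precisely. The paper's proof is your ``first attempt''. It fixes a primitive $\psi\in Z$, sums over the primitive $\alpha\in\langle\phi,\psi\rangle_\ZZ$, and concludes with $\sum_\alpha\sum_i a_i\phi_i[F_\alpha(\poly B_i)]=2\sum_i a_i\phi_i[\poly B_i]$, citing Lemma~\ref{lemma:magic}. That lemma requires $\phi_i\in\langle\phi,\psi\rangle$, which for $k$ linearly independent $\phi_i$ is only possible when $k=2$.

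For $\phi_i$ outside the plane and generic $\psi$, the left-hand sum is the total $\phi_i$-variation around the shadow boundary of $\poly B_i$ under $(\phi,\psi)$. This can exceed $2\phi_i[\poly B_i]$. Lift the vertices of a lattice hexagon in the $(\phi,\psi)$-plane alternately to $\phi_i$-heights $0$ and $1$: the variation is $6$, while $2\phi_i[\poly B_i]=2$.

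So the paper does not supply the idea you are missing. As written, both arguments prove the inequality only for $k=2$. The general case, on which Section~\ref{sec:dual-poly} relies, still needs a new ingredient, such as the inductive reduction you sketch, and you have no construction of $\poly B'$.

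\textbf{The equality case.} Your sketch is not yet an argument, and it aims Lemma~\ref{lemma:two-flat} at the wrong polytopes. The edges of $\poly D$ or $\poly B_i$ need not be $\phi$- or $\phi_i$-flat. For example, take $\poly D=\poly B_i=[0,v]$ with all $\phi_i(v)>0$ and $\poly E$ a point; equality holds there.

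The right object is the cofactor $P_i$. Your second tightness condition, $\phi_i[F_\alpha(\poly D)]=\phi_i[F_\alpha(\poly B_i)]$, says exactly that $\phi_i[F_\alpha(P_i)]=0$. This holds first for $\alpha\in C'$, and then, varying $\psi$, for all primitive $\alpha\in Z$.

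The complement $Z^c$ lies in finitely many hyperplanes $w^\perp$, where $w$ runs over edge directions of $\poly E$, and all of them contain $\phi$. By contrast, the normal cone of a non-$\phi$-flat edge of $P_i$ is relatively open in a hyperplane not containing $\phi$. Hence every edge of $P_i$ is $\phi$-flat or $\phi_i$-flat. Lemma~\ref{lemma:two-flat} then gives $P_i=Q_i+R_i$ with $Q_i\in\mathcal P_T(\ker\phi)$ and $R_i\in\mathcal P_T(\ker\phi_i)$.

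Now apply $F_\phi$ to $\poly D+Q_i+R_i=\poly B_i+\poly E$ and subtract. Using $\phi_i[F_\phi(\poly B_i)]=0$, which is the vanishing of the reinserted terms, you get $\poly D-F_\phi(\poly D)-\poly B_i\in\mathcal P_T(\ker\phi_i)$.

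Note the sign. The ``$+\poly B_i$'' in the statement, which you copied, is a typo: membership of $\poly D-F_\phi(\poly D)+\poly B_i$ would force $\phi_i[\poly B_i]=0$. Proposition~\ref{prop:inters} is not needed here.
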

\begin{proof}
Consider the set
\begin{equation}
    Z \coloneqq \left\{ \alpha \colon H \to \RR \mid F_\alpha(E)\text{ is a singleton}  \right\}.
\end{equation}
Then clearly $Z$ is an open dense set of $\Hom(H, \RR)$, which is invariant under positive scaling and under translations by real multiples of $\phi$. Choose a primitive $\psi\colon H \twoheadrightarrow \ZZ$ in $Z$, and let $C$ be the set of all primitive characters in $\langle \phi, \psi\rangle_\ZZ$; then $C' \coloneqq C \cap Z = C \setminus \{\pm \phi\}$. Finally, define $P_i \coloneqq \poly B_i + \poly E - \poly D_i \in \mathfrak P_T(H)$. By Lemma~\ref{lemma:magic}, we have
{\allowdisplaybreaks
\begin{align}
2c\phi[\poly D] &= \sum_{\alpha \in C}c\phi[F_\alpha(\poly D)]
\\ &= \sum_{\alpha \in C'}c\phi[F_\alpha(\poly D)]
\\ &\le \sum_{\alpha \in C'} \sum_{i = 1}^k
a_i\phi_i[F_\alpha(\poly D)]
\\ &\le \sum_{\alpha \in C'} \sum_{i = 1}^k
a_i\phi_i[F_\alpha(\poly D)] + a_i\phi_i[F_\alpha(P_i)]
\\ &= \sum_{\alpha \in C'} \sum_{i = 1}^k
a_i\phi_i[F_\alpha(\poly D + P_i)]
\\ &= \sum_{\alpha \in C'} \sum_{i = 1}^k
a_i\phi_i[F_\alpha(\poly B_i + \poly E)]
\\ &= \sum_{\alpha \in C'} \sum_{i = 1}^k
a_i\phi_i[F_\alpha(\poly B_i)] + a_i\phi_i[F_\alpha(\poly E)]
\\ &= \sum_{\alpha \in C'} \sum_{i = 1}^k
a_i\phi_i[F_\alpha(\poly B_i)]
\\ &\le \sum_{\alpha \in C} \sum_{i = 1}^k
a_i\phi_i[F_\alpha(\poly B_i)]
\\ &= 2 \sum_{i=1}^k a_i\phi_i[\poly B_i].
\end{align}}

The inequality is proved; let us now assume that equality holds. Then, in particular, we have $\phi_i[F_\phi(\poly B_i)] = 0$ for all $i$, and $\phi_i[F_\alpha(P_i)] = 0$ for all $i$ and all $\alpha \in C'$. Since $\psi$ was arbitrary, we actually have $\phi_i[F_\alpha(P_i)] = 0$ for all $i$ and all $\alpha$ primitive in $Z$.

Let us now show that every edge (dimension-$1$ face) of $P_i$ is either $\phi$-flat or $\phi_i$-flat. The complement of the set $Z$ consists of a finite number of hyperplanes of $\Hom(H, \RR)$, dual to the edges of $E$, and all containing $\phi$. If an edge $e$ of $P_i$, with direction vector $v$, is not $\phi$-flat, then $\phi(v) \ne 0$, and the set $\{\alpha \mid F_\alpha(P_i) = e\}$ is an open cone of the hyperplane $\ker(v^*)$. But since $v^*(\phi) = \phi(v) \ne 0$, this open cone does not lie entirely inside the complement of $Z$. It follows that for some $\alpha \in Z$, $e = F_\alpha(P_i)$, and hence $e$ is $\phi_i$-flat.

By Lemma~\ref{lemma:two-flat}, this implies $P_i = Q_i + R_i$ for some $Q_i \in \mathcal P_T(\ker \phi), R_i \in \mathcal P_T(\ker \phi_i)$. Hence
\begin{equation}
    \poly D + Q_i + R_i = \poly B_i + \poly E.
\end{equation}
By taking $F_\phi$, we also have
\begin{equation}
    F_\phi(\poly D) + Q_i + F_\phi(R_i) = F_\phi(\poly B_i) + \poly E.
\end{equation}
Subtracting, we obtain
\begin{equation}
    \poly D - F_\phi(\poly D) = \poly B_i + (F_\phi(R_i) - R_i - F_\phi(\poly B_i)),
\end{equation}
and since $F_\phi(R_i) - R_i - F_\phi(\poly B_i) \in \mathcal P_T(\ker \phi_i)$, the proof is complete.
\end{proof}

\subsection{Crossed products and skew polynomial rings}
We will now introduce a construction generalizing the usual group ring.

\begin{defin}[{\cite[Section~10.3.2]{l2}}]
    Let $R$ be a ring and let $G$ be a group. Consider set maps $c\colon G \to \operatorname{Aut}(R)$, $c\colon g \mapsto c_g$, and $\tau\colon G\times G \to R^\times$, such that:
    \begin{align}
        c_g (c_{g'}(r)) &= \tau(g,g') \cdot c_{gg'}(r)\cdot \tau(g,g')^{-1},
        \\ \tau(g, g') \cdot \tau(gg', g'') &= c_g(\tau(g', g'')) \cdot \tau (g, g'g''),
    \end{align}
    for all $g,g',g'' \in G$ and $r \in R$.    
    The \emph{crossed product} $R \ast G = R \ast_{c,\tau} G$ is a ring, consisting of all formal $R$-linear combinations of elements of $G$, with multiplication given by the rules $g \cdot r = c_g (r) \cdot g$ and $g \cdot g'$ = $\tau(g, g') \cdot (gg')$. 
\end{defin}
The usual group ring is recovered in the special case $c_g \equiv \id_R$, $\tau \equiv 1$. If only $\tau$ is trivial but $G\simeq \ZZ$, it is easy to see that we obtain a so-called \emph{skew Laurent polynomial ring}:
\begin{defin}
    Let $R$ be a ring with an automorphism $\sigma$. The \emph{skew Laurent polynomial ring} $R[u^\pm; \sigma]$ (or simply $R[u^\pm]$) is the ring of all finite sums of monomials $ru^\alpha$, with $r \in R$ and $\alpha \in \mathbb Z$, and with multiplication given by the rule $u r = \sigma(r) u$.
\end{defin}
Indeed, by~\cite[Example~10.54]{l2}, every crossed product with $\ZZ$ is isomorphic to such a skew polynomial ring, with the variable $u$ corresponding to a generator of $\ZZ$. We also mention the following important property:
\begin{prop}[{\cite[Proposition~2.1.1~(iii)]{cohn}}]
If $K$ is a skew field, the skew Laurent polynomial ring $K[u^\pm]$ is a noncommutative principal ideal domain: every left or right ideal is principal.
\end{prop}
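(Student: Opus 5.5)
The plan is to prove that every one-sided ideal of $K[u^\pm]$ is principal via a Euclidean division algorithm based on the \emph{length} of a Laurent polynomial. This is the standard argument for commutative Laurent polynomial rings over a field. We handle right ideals in detail; the left case is symmetric, with the roles of multiplication reversed and $\sigma^{-1}$ in place of $\sigma$.

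First we record the basic structure. Every nonzero element can be written uniquely as $f = \sum_{\alpha=m}^{M} r_\alpha u^\alpha$ with $r_m, r_M \ne 0$, and we set $\ell(f) = M - m \ge 0$. Since $K$ is a skew field and $\sigma$ is an automorphism, leading and trailing coefficients multiply without vanishing. Indeed, $(r u^M)(s u^N) = r\sigma^M(s) u^{M+N}$, and this is nonzero when $r, s \ne 0$. Hence $K[u^\pm]$ has no zero divisors and $\ell(fg) = \ell(f) + \ell(g)$. The units are exactly the monomials $r u^\alpha$ with $r \ne 0$, because $(r u^\alpha)^{-1} = \sigma^{-\alpha}(r^{-1}) u^{-\alpha}$.

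Next comes the division step. Given $f, g$ with $g \ne 0$, we find $q$ and $h$ with $f = gq + h$ and either $h = 0$ or $\ell(h) < \ell(g)$. Multiplying $g$ on the right by a unit, we may assume $g = \sum_{\alpha=0}^{d} s_\alpha u^\alpha$ with $s_0 \ne 0$ and $s_d = 1$. Then $g \cdot t u^{\beta} = \sum_\alpha s_\alpha \sigma^\alpha(t) u^{\alpha+\beta}$, whose top coefficient is $\sigma^d(t)$. Choosing $t = \sigma^{-d}(r)$ lets us cancel the top coefficient $r$ of $f$ while staying within the original range of exponents. We iterate this, lowering the top exponent of $f$, until the support of the remainder lies in a window of width less than $d$, that is, $\ell(h) < d = \ell(g)$. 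Only the top-cancellation is needed, since the bottom exponent never decreases below that of $f$. The process terminates because the top exponent strictly decreases and is bounded below by $m$.

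Now let $I$ be a nonzero right ideal and pick $g \in I \setminus \{0\}$ of minimal length. For $f \in I$, division gives $h = f - gq \in I$ with $\ell(h) < \ell(g)$, so $h = 0$ by minimality. Hence $I = gK[u^\pm]$.

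The only delicate point is the division step: getting the twisting by $\sigma$ right and making sure the remainder's length, rather than its degree, drops. Using right multiplication for right ideals, with the coefficient correction $\sigma^{-d}$, handles this.
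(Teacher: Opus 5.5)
Your proof is correct. The length $\ell$ is additive because leading and trailing coefficients multiply to $r\sigma^{M}(s)\neq 0$. The division step works as you describe: you subtract $g\cdot\sigma^{-d}(r)u^{M-d}$ only while the current top exponent is at least $m+d$, so the support stays inside $[m,M]$ and the final remainder has length $<d$. A minimal-length element then generates any right ideal. Your symmetry remark for left ideals amounts to running the same argument in the opposite ring $K[u^{\pm};\sigma]^{\mathrm{op}}\cong K^{\mathrm{op}}[u^{\pm};\sigma^{-1}]$, which is again a skew Laurent ring over a skew field.

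The paper gives no argument of its own here; it simply quotes Cohn. A common textbook route instead goes through the ordinary skew polynomial ring $K[u;\sigma]$. That ring admits degree-based division with remainder on both sides, so it is a left and right principal ideal domain. On the side where you needed $\sigma^{-d}$, this division requires $\sigma$ to be surjective. One then observes that $K[u^{\pm};\sigma]$ is the localization of $K[u;\sigma]$ at the Ore set $\{u^k\}_{k\ge 0}$. Hence a one-sided ideal $J$ of the Laurent ring is generated by any generator of $J\cap K[u;\sigma]$: for a right ideal, $x\in J$ gives $xu^k\in J\cap K[u;\sigma]$ for $k\gg 0$.

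Your length-based division treats the Laurent ring directly and is completely self-contained, with no Ore localization. The price is the exponent-window bookkeeping you had to check by hand. The localization route recycles the familiar degree-based division and the general fact that principal one-sided ideals survive Ore localization. Either is an adequate justification of the cited statement.
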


If $K$ is a skew field and $H$ is a finitely generated free abelian group, then $K\ast H$ is an \emph{Ore domain} and hence admits a skew field of fractions $\mathcal D\coloneqq \operatorname{Ore}(K\ast H)$~\cite[Theorem~2.14]{kielak-bns}. By Lemma~3.12 in~\cite{kielak-bns} and the discussion that follows, there is a well-defined \emph{polytope homomorphism} $P\colon K\ast H\setminus \{0\} \to \mathfrak P(H)$, a monoid homomorphism sending each element to the convex hull of its support. This map extends to a group homomorphism $P\colon \mathcal D^\times \to \mathcal P(H)$.

As such, given a square matrix $A \in \mathrm M(n, K \ast H)$ which is invertible over $\mathcal D$, its Dieudonné determinant $\det A$ is an element of $\mathcal D^\times / [\mathcal D^\times, \mathcal D^\times]$, the abelianization of the group of units of $\mathcal D$. Hence, the polytope $P(A) \coloneqq P(\det A) \in \mathcal P(H)$ is well defined and is called the \emph{Newton polytope} of $A$ (\cite[Definition~3.13]{kielak-bns}). Moreover, by Kielak's \emph{single polytope theorem}~\cite[Theorem~3.14]{kielak-bns}, $P(A)$ is actually a single polytope in $\mathfrak P(H)$. In Section~\ref{sec:dual-poly}, we will generalize this theorem to the case of rectangular matrices of full rank over $\mathcal D$.

\subsection{Matrix common divisors}

Let $R$ be any ring and let $M \in \mathrm{M}(m, n, R)$ be a rectangular matrix with $m\ge n$. We adopt the convention where matrices act by multiplication on the right, and modules are left modules, unless otherwise specified.

Recall that, if $R$ is a principal ideal domain (PID), a greatest right common divisor (GRCD) of a tuple $A = (A_1, \dots, A_m) \in R^m$ is a generator of the left ideal $^\bullet(A_1, \dots, A_m)$. Hence, if $A$ is considered as a column vector, we can say:

\begin{defin}
    An element $d \in R$ is a \emph{greatest right common divisor} of $A \in \mathrm{M}(m,1,R)$ if there exist vectors $S \in \mathrm{M}(1,m,R), T \in \mathrm{M}(m,1,R)$ such that $SA = d$ and $Td = A$.
\end{defin}

We can generalize this definition as follows.

\begin{defin}
    A square matrix $D \in \mathrm{M}(n,R)$ is a \emph{greatest right common divisor} of $A \in \mathrm{M}(m,n,R)$ if there exist matrices $S \in \mathrm{M}(n,m,R), T \in \mathrm{M}(m,n,R)$ such that $SA = D$ and $TD = A$.
\end{defin}

Let us now discuss existence and uniqueness of the matrix GRCD.

\begin{lemma}
    If $R$ is a PID and $m \ge n$, then every matrix $A \in \mathrm{M}(m,n,R)$ has a GRCD.
\end{lemma}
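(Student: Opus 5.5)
The plan is to reduce to a normal form via elementary row operations, using the fact that every left ideal of $R$ is principal. The key observation is that if $U \in \mathrm{GL}(m,R)$ is invertible and $UA = \begin{pmatrix} D \\ 0 \end{pmatrix}$ with $D \in \mathrm M(n,R)$, then $D$ is a GRCD of $A$. Indeed, write $U = \begin{pmatrix} S \\ S' \end{pmatrix}$ with $S \in \mathrm M(n,m,R)$; then $SA = D$. Write $U^{-1} = \begin{pmatrix} T & T' \end{pmatrix}$ with $T \in \mathrm M(m,n,R)$; then
\begin{equation}
A = U^{-1}\begin{pmatrix} D \\ 0 \end{pmatrix} = TD.
\end{equation}
So it suffices to bring $A$ into this upper-block form by left multiplication by invertible matrices, i.e.\ by invertible row operations.

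This is done by induction on $n$, a Hermite-type reduction. For the first column $(A_{11},\dots,A_{m1})^T$, let $d$ generate the left ideal $R A_{11} + \dots + R A_{m1}$. If the column is zero, there is nothing to do in this step. Otherwise $d\neq 0$, and we can write $d = \sum s_i A_{i1}$ and $A_{i1} = t_i d$.

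The main obstacle is producing an \emph{invertible} $m\times m$ matrix whose first row is $(s_1,\dots,s_m)$ and which sends the column to $(d,0,\dots,0)^T$. Over a noncommutative PID one cannot simply complete a unimodular row. I would instead argue two entries at a time, which reduces everything to the $2\times 1$ case. Given $a,b$ with $Ra+Rb=Rd$ and $d \neq 0$, write $a = a'd$, $b = b'd$, and $d = sa+tb$. Since $R$ is a domain, $(sa'+tb')d = d$ gives $sa'+tb'=1$, so $(a',b')^T$ is a column with a left inverse $(s,t)$. The left ideal $\{(x,y) : xa'+yb'=0\}$ is a submodule of $R^2$. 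Using that submodules of free modules over a PID are free, it is free of rank $1$, so it is generated by some $(u,v)$. Then the rows $(s,t)$ and $(u,v)$ form a matrix sending $(a,b)^T$ to $(d,0)^T$, and I would check that it is invertible by exhibiting $(a',b')^T$ and a suitable second column as its inverse. This is the step requiring care. An alternative is to invoke directly the known fact that a PID (two-sided) is a Hermite ring, i.e.\ admits row reduction to echelon form (Cohn).

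Applying the $2\times 1$ step successively to pairs of rows, we obtain an invertible $U_1$ with $U_1A = \begin{pmatrix} d & * \\ 0 & A' \end{pmatrix}$, where $A' \in \mathrm M(m-1,n-1,R)$. Since $m-1\ge n-1$, the inductive hypothesis gives an invertible $U'$ reducing $A'$ to $\begin{pmatrix} D' \\ 0\end{pmatrix}$, and we take $U = \begin{pmatrix} 1 & 0 \\ 0 & U'\end{pmatrix}U_1$. The resulting $D$ is upper triangular $n\times n$, which completes the induction; the base case $n=0$ is trivial.
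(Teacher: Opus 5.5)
Your proposal is correct and follows the paper's proof. The paper writes $A = UJV$ in Jacobson normal form and extracts $S$, $T$, $D$ from $U^{-1}A = JV = \begin{pmatrix} D \\ 0 \end{pmatrix}$ exactly as in your key observation; the only difference is that it cites the Jacobson form, whereas you build a row-echelon form by hand (or cite the Hermite property). The step you flagged also goes through: $(s,t)$ splits the surjection $R^2 \to R$, $(x,y)\mapsto xa'+yb'$, so $R^2 = R(s,t)\oplus R(u,v)$ (with $N$ of rank $1$ by IBN, $R$ being an Ore domain), hence the rows of your $2\times 2$ matrix form a basis of $R^2$ and it is invertible.
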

\begin{proof}
    Let $A = UJV$ be the Jacobson normal form of $A$, with:
    \begin{itemize}
        \item $U \in \mathrm{SL}(m, R)$;
        \item $V \in \mathrm{SL}(n, R)$;
        \item $J$ diagonal, with its last $m-n$ rows zero.
    \end{itemize}
    Then, it suffices to take:
    \begin{itemize}
        \item $D$ as the first $n$ rows of $JV$;
        \item $T$ as the first $n$ columns of $U$;
        \item $S$ as the first $n$ rows of $U^{-1}$.
    \end{itemize}\vspace{-4.5ex}
\end{proof}
\begin{lemma}\label{lemma:hermite}
    Let $R$ be a PID and let $m\ge n$. Given two matrices $A \in M(m,n,R)$ and $D \in M(n,R)$, we have $D = \grcd_R A$ if and only if $\image A = \image D$.
\end{lemma}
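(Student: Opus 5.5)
We need to interpret "image": matrices act on the right and modules are left modules, so $\image A = R^m A \subseteq R^n$, the left submodule of row vectors spanned by the rows of $A$. Similarly $\image D = R^n D$. With this convention the proof is a direct unwinding of the definition of GRCD; the PID hypothesis is needed only for the converse direction.

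For the forward direction, suppose $D = \grcd_R A$, so there are $S \in \mathrm M(n,m,R)$ and $T \in \mathrm M(m,n,R)$ with $SA = D$ and $TD = A$. Then
\[
R^n D = R^n S A \subseteq R^m A \quad\text{and}\quad R^m A = R^m T D \subseteq R^n D,
\]
so $\image A = \image D$.

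For the converse, assume $R^m A = R^n D$. Each row $e_i D$ of $D$ lies in $R^m A$, so we can write $e_i D = s_i A$ for some $s_i \in R^m$. Stacking the $s_i$ as the rows of a matrix $S \in \mathrm M(n,m,R)$ gives $SA = D$. In the same way, each row of $A$ lies in $R^n D$, which yields $T \in \mathrm M(m,n,R)$ with $TD = A$. Hence $D$ is a GRCD of $A$.

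Neither direction actually uses the PID hypothesis or the condition $m \ge n$; these serve only to guarantee, via the preceding lemma, that some GRCD exists. The only real care needed is to keep the conventions straight: images are taken in the row space $R^n$ under right multiplication, as left submodules. If one instead read the image as a column span, the statement would fail, so making the row-space convention explicit is the step that matters.
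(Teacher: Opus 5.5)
Your proof is correct, and your forward direction is the same as the paper's. For the converse you take a different and more elementary route.

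The paper's converse goes through Hermite normal forms over the PID $R$. From $\image A = \image D$ it concludes that there are invertible $U \in \mathrm M(m,R)$ and $U' \in \mathrm M(n,R)$ with $UA = \begin{bmatrix} U'D \\ 0 \end{bmatrix}$. It then reads off $S$ and $T$ from the first $n$ rows of $U$ and the first $n$ columns of $U^{-1}$, twisted by $U'^{\mp 1}$.

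You instead lift each row of $D$ through the surjection $x \mapsto xA$, and each row of $A$ through $y \mapsto yD$, and stack the lifts. This works over an arbitrary ring. It shows that the PID hypothesis and $m \ge n$ are needed only for the existence lemma, not for this characterization. It also avoids relying on Hermite normal forms over noncommutative PIDs, which are less standard than in the commutative case.

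The normal-form route does give something slightly stronger. Its $S$ and $T$ additionally satisfy $ST = I_n$, because the first $n$ rows of $U$ times the first $n$ columns of $U^{-1}$ equal $I_n$. The paper never uses this extra property, so your argument suffices for every later application of the lemma.
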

\begin{proof}
    If there exist $S, T$ such that $SA = D$ and $TD = A$, then clearly $\image A = \image D$. Conversely, if $A$ and $D$ have the same image, then they have the same Hermite normal form up to adding zero rows. That is, there exist matrices $U \in \mathrm{SL}(m,R), U' \in SL(n,R)$ such that
    \begin{equation}
        UA = \begin{bmatrix}
            U'D \\ 0
        \end{bmatrix}.
    \end{equation}
    We can now take $S$ as $U'^{-1}$ times the first $n$ rows of $U$, and $T$ as the first $n$ rows of $U^{-1}$ times $U'$.
\end{proof}
For general rings, we have the following results.

\begin{lemma}\label{lemma:grcd-integer}
    Let $D$ be a GRCD of a matrix $A \in \mathrm{M}(m,n,R)$. If there exist two matrices $Z, Z' \in \mathrm{M}(n, R)$ such that $D = Z'ZD$, then $D' \coloneqq ZD$ is also a GRCD of $A$.
\end{lemma}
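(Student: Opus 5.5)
The plan is to produce explicit witnesses $S'$ and $T'$ for $D'$ directly from the witnesses for $D$. By hypothesis there are matrices $S \in \mathrm{M}(n,m,R)$ and $T \in \mathrm{M}(m,n,R)$ with $SA = D$ and $TD = A$. We look for $S'$ with $S'A = D'$ and $T'$ with $T'D' = A$.

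For the first relation, set $S' \coloneqq ZS$. Then
\begin{equation}
S'A = ZSA = ZD = D'.
\end{equation}
This uses only associativity; no hypothesis on $Z$ or $Z'$ is needed here.

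For the second relation, we use $D = Z'ZD = Z'D'$. Set $T' \coloneqq TZ'$, which lies in $\mathrm{M}(m,n,R)$. Then
\begin{equation}
T'D' = TZ'ZD = TD = A.
\end{equation}
So $D'$ is a GRCD of $A$, witnessed by $S' = ZS$ and $T' = TZ'$.

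There is no real obstacle. The only thing to check is that the dimensions are compatible: $Z, Z'$ are $n\times n$, $S$ is $n\times m$ and $T$ is $m\times n$, so every product above is defined. The definition of a GRCD requires nothing else, in particular no invertibility of $Z$ over $R$.
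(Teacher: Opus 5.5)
Your proof is correct and is exactly the paper's argument: the paper also takes $ZS$ and $TZ'$ as the witnesses. The second identity follows, as you note, from $TZ'D' = TZ'ZD = TD = A$.
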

\begin{proof}
    If $SA = D$ and $TD = A$, then $ZSA = D'$ and $TZ'D' = A$.
\end{proof}
A consequence of this lemma is that left multiplication by an invertible matrix preserves the GRCD property. Let us now show a converse to it.

\begin{lemma} \label{lemma:unique-grcd}
    Let $D, D'$ be two GRCDs of a matrix $A \in \mathrm{M}(m,n,R)$. Then there exist two matrices $Z, Z' \in \mathrm{M}(n, R)$ such that $D = Z'D'$ and $D' = ZD$.
\end{lemma}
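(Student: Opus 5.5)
The idea is to compose the witnessing matrices directly from the two definitions; no property of $R$ beyond associativity is needed. By definition of a GRCD, $D$ comes with matrices $S \in \mathrm{M}(n,m,R)$ and $T \in \mathrm{M}(m,n,R)$ such that $SA = D$ and $TD = A$. Likewise, $D'$ comes with matrices $S', T'$ of the same sizes such that $S'A = D'$ and $T'D' = A$.

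First I will factor $D$ through $D'$. Substituting $A = T'D'$ into $SA = D$ gives
\begin{equation}
    D = SA = S(T'D') = (ST')D'.
\end{equation}
So we may take $Z' \coloneqq ST'$. It is an $n\times n$ matrix, since $S$ is $n\times m$ and $T'$ is $m\times n$.

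Symmetrically, substituting $A = TD$ into $S'A = D'$ gives
\begin{equation}
    D' = S'A = S'(TD) = (S'T)D.
\end{equation}
So we may take $Z \coloneqq S'T \in \mathrm{M}(n,R)$.

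The only thing left to check is that the products have the right shapes, which follows from the sizes of $S, T, S', T'$ fixed in the definition. I expect no real obstacle here. The point worth noting is that, unlike the existence result, this argument uses neither the PID hypothesis nor any cancellation in $R$. It is also worth remarking that $D = Z'ZD$ then holds, so the statement is exactly the converse of Lemma~\ref{lemma:grcd-integer}, as the text announces.
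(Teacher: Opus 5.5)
Your proof is correct and matches the paper's argument exactly: substituting $A = T'D'$ into $SA = D$ and $A = TD$ into $S'A = D'$ gives $Z' = ST'$ and $Z = S'T$. Your side remark that $D = Z'ZD$ is also right, since $Z'ZD = Z'D' = D$.
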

\begin{proof}
    There exist matrices $S, S', T, T'$ such that $SA = D, TD = A, S'A = D', T'D' = A$. Hence, we have $ST'D' = D$ and $S'TD = D'$, and we can define $Z\coloneqq S'T, Z'\coloneqq ST'$.
\end{proof}

\begin{remark}
    From now on, we will adopt the notation $\grcd_R A$ to denote any GRCD of the matrix $A$ over the ring $R$. When this notation appears in a statement, it shall be taken to mean that the statement is true for every such choice of GRCD, unless otherwise specified.
\end{remark}

\section{Dimension of the cokernel}\label{sec:tri-ineq}

Assuming the Atiyah conjecture, we may rephrase Conjecture~\ref{conj:main}. Given any character $\phi$ defined on $G$, let $D_\phi$ denote the Linnell skew field $\mathcal D(\ker \phi)$. We would like to show that the map
\begin{equation}
    \phi \mapsto \dim_{D_\phi}(H_n(D_\phi[u^\pm] \otimes_{\mathbb Z G} C_\blt)) \in \mathbb N
\end{equation}
is an integer seminorm. In fact, we only need to prove the triangle inequality~\cite{thurston,kapovich}.

The cases $\rk G = 0,1$ are trivial. Let $\ab \colon G \twoheadrightarrow H$ be the universal free abelian quotient, with $\rk H \ge 2$, and let $K \coloneqq \mathcal D(\ker \ab)$. By~\cite[Section~6.1]{l2thur}, the Linnell skew field $\mathcal D(G)$ is the Ore skew field of fractions of a crossed product ring $R \coloneqq K \ast H$. Analogously, $D_\phi \simeq \operatorname{Ore}(K \ast \ker \phi)$.

Since every skew Laurent polynomial ring $R_\phi \coloneqq D_\phi[u^\pm]$ (where $\phi(u) = 1$) is a localization of $R$, and localization is exact~\cite[Proposition~II.3.5]{ore-flat}, we may define $M \coloneqq H_n( R \otimes_{\mathbb Z G} C_\blt)$ and prove that
\begin{equation}
    \dim_{D_\phi}(R_\phi \otimes_{R} M)
\end{equation}
is a seminorm in $\phi$.

By~\cite[Lemma~10.55~(2)]{l2}, $R$ is Noetherian, so $M$ is a finitely presented module, isomorphic to the cokernel of a matrix $A\in \mathrm{M}(m, n, R)$ (acting by right multiplication, where $m \ge n$), with full rank over $\mathcal D(G)$. Hence, we need to prove that the function
\begin{equation}
    v_A(\phi)\coloneqq \dim_{D_\phi} \coker(R_\phi\otimes_R A),
\end{equation}
extended homogeneously to all integer characters $\phi$, satisfies the triangle inequality.

Consider primitive characters $\phi_1 \ne \pm \phi_2$ and positive integers $a_1,a_2$. Let $a_1\phi_1 + a_2\phi_2 = c\phi$, with $\phi$ primitive and $c \ge 1$.
We shall prove the triangle inequality $v_A(a_1\phi_1 + a_2\phi_2) \le a_1v_A(\phi_1) + a_2v_A(\phi_2)$.

Let $B_1$ be a GRCD of $A$ over $R_\phi$, with $S_1A = B_1$; up to left multiplication, we can clear denominators in both $S_1$ and $B_1$, using Lemma~\ref{lemma:grcd-integer}, and assume that $S_1$ and $B_1$ have entries in $R$. Then, since $\coker (R_{\phi_1}\otimes B_1) = \coker (R_{\phi_1} \otimes A)$, we have $a_1v_A(\phi_1) = a_1\deg_{\phi_1}(\det B_1) = [a_1\phi_1](P(B_1))$.

Analogously, we can find a GRCD of $A$ over $R_{\phi_2}$, say $B_2$, such that $S_2A = B_2$, with $S_2$ and $B_2$ having entries in $R$; again, we have $a_2v_A(\phi_2) = [a_2\phi_2](P(B_2))$.

Finally, construct a block matrix $B\in \mathrm{M}(2n, n, R)$ by stacking $B_1$ on top of $B_2$, and define $D \in \mathrm{M}(n, R)$ as a GRCD of $B$ over $R_\phi$, with $D = SB$ (and $S \in \mathrm{M}(n,2n,R)$). Now
\begin{equation}
    D = SB = S\begin{bmatrix}
        B_1 \\ B_2
    \end{bmatrix} = S\begin{bmatrix}
        S_1 \\ S_2
    \end{bmatrix} A,
\end{equation}
and so $\image_{R_\phi} (D) \subseteq \image_{R_\phi} (A)$. It follows that:
\begin{align}
    v_A(a_1\phi_1 + a_2\phi_2) = cv_A(\phi) 
   & = c\dim_{D_{\phi}} \coker (R_{\phi} \otimes A) 
\\ & \le c\dim_{D_{\phi}} \coker (R_{\phi} \otimes D) \label{eq:ineq-hermite}
\\ &= c\cdot [\phi](P(\det D))
\\ &= [a_1\phi_1 + b_2\phi_2](P(\det D)).
\end{align}

Now recall that there exists a matrix $T \in \mathrm{M}(2n, n, R_{\phi})$ such that $TD = B$. We can clear the denominators in $T$ by multiplying both sides by a suitable polynomial $e(u) \in K[u^\pm]$ on the left, where $u$ is a lift of a generator of $\ker \phi$. Hence, we get
\begin{equation}
    \begin{bmatrix}
        e(u)T_1 \\ e(u)T_2
    \end{bmatrix} D = \begin{bmatrix}
        e(u)B_1 \\ e(u)B_2
    \end{bmatrix}.
\end{equation}
Note that $T_1, T_2, D, B_1, B_2$ are invertible over $\mathcal D(G)$. Taking the Newton polytope for each block, we find:
\begin{equation}
    \begin{cases}
        P(D) \le P(B_1) +P(E),\\
        P(D) \le P(B_2) +P(E),
    \end{cases}
\end{equation}
where $E$ is the matrix $e(u) I_n$. Recall that, by the single polytope theorem, all polytopes involved here are single polytopes in $\mathfrak P_T(H)$. Thus, since $P(E)$ is $(a_1\phi+a_2\phi_2)$-flat, we can apply Proposition~\ref{prop:main} in the case $k=2$, with $\poly D = P(D)$, $\poly E = P(E)$, $\poly B_1 = P(B_1)$, $\poly B_2= P(B_2)$, and obtain
\begin{align}
    v_A(a_1\phi_1+a_2\phi_2) &\le [a_1\phi_1 + a_2\phi_2](P(D)) 
\\  &\le [a_1\phi_1](P(B_1)) + [a_2\phi_2](P(B_2)) 
\\  &= a_1v_A(\phi_1) + a_2v_A(\phi_2)
\end{align}
for $\phi_1 \ne \pm \phi_2$ primitive and $a_1,a_2$ positive integers. Clearly, the inequality holds for $\phi_1 = \pm \phi_2$ as well, and can be extended to real characters in $\Hom(G, \mathbb R)$, yielding a polyhedral seminorm (see~\cite{thurston,kapovich}). 

In summary, we have proved:
\begin{thm}\label{thm:seminorm}
    Let $K$ be a skew field, let $H$ be a finitely generated free abelian group, and let $R$ be a crossed product ring $K\ast H$. Given a matrix $A\in \mathrm{M}(m, n, R)$, where $m \ge n$, having full rank over the Ore skew field of fractions, the function
    \begin{equation}
        v_A(\phi)\coloneqq \dim_{D_\phi} \coker(R_\phi\otimes_R A),
    \end{equation}
    defined over primitive integer characters $\phi\colon H \twoheadrightarrow \mathbb Z$,
    extends by homogeneity and continuity to a polyhedral seminorm over $\Hom(H, \mathbb R)$.
\end{thm}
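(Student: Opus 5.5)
The theorem is essentially a summary of the argument just carried out, so the plan is to reorganize that argument into a self-contained proof. There are three ingredients. First, one reduces the statement to an integer triangle inequality. Second, one realizes each value $v_A(\phi)$ as the $\phi$-thickness of a Newton polytope of a square matrix over $R$. Third, one combines two such square matrices via a GRCD over $R_\phi$ and applies Proposition~\ref{prop:main} with $k=2$.

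\textbf{Reduction.} $v_A$ is defined on primitive characters and extended homogeneously to $\Hom(H,\ZZ)$. By the standard argument of Thurston~\cite{thurston} (see also~\cite{kapovich}), an integer-valued, homogeneous, symmetric function on integer characters satisfying the triangle inequality extends by continuity to a polyhedral seminorm on $\Hom(H,\RR)$. Symmetry $v_A(-\phi)=v_A(\phi)$ is clear, since $R_{-\phi}=R_\phi$ with $u$ replaced by $u^{-1}$. It therefore suffices to show
\begin{equation}
v_A(c\phi)\le a_1 v_A(\phi_1)+a_2v_A(\phi_2)
\end{equation}
whenever $a_1\phi_1+a_2\phi_2=c\phi$ with $\phi_1\ne\pm\phi_2$ primitive. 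The case $\phi_1=\pm\phi_2$ is trivial.

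\textbf{Polytope realization.} $R_\phi=D_\phi[u^\pm]$ is a PID, so $A$ has a GRCD $B$ over $R_\phi$. By Lemma~\ref{lemma:grcd-integer}, left-multiplying by a scalar matrix in $\mathcal D$ clears denominators and keeps $B$ a GRCD, so we may take $B$ and $S$ with $SA=B$ to have entries in $R$. Lemma~\ref{lemma:hermite} gives $\coker(R_\phi\otimes A)=\coker(R_\phi\otimes B)$. For a square $B$ invertible over $\mathcal D$, the $D_\phi$-dimension of this cokernel equals the $u$-degree of $\det B$ over $R_\phi$, which is $\phi[P(B)]$. This step is a Jacobson normal form computation: diagonal entries are skew Laurent polynomials whose cokernel dimension is the degree span, and the Dieudonné determinant is compatible with passing from $\operatorname{Ore}(K*H)$ to $R_\phi$.

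\textbf{Key step.} Take GRCDs $B_1,B_2$ over $R_{\phi_1},R_{\phi_2}$ with entries in $R$, stack them, and let $D$ be a GRCD over $R_\phi$ of the stack, again with entries in $R$. Since $\image_{R_\phi}D\subseteq\image_{R_\phi}A$, we get
\begin{equation}
v_A(\phi)\le\dim\coker(R_\phi\otimes D)=\phi[P(D)].
\end{equation}
From $TD=\begin{bmatrix}B_1\\ B_2\end{bmatrix}$ over $R_\phi$, clearing denominators with some $e(u)\in K[u^\pm]$, where $u$ lies in $\ker\phi$, gives $e(u)T_iD=e(u)B_i$ over $R$. Multiplicativity of the Dieudonné determinant and the single polytope theorem then yield $P(D)\le P(B_i)+P(E)$ in $\mathfrak P_T(H)$, with $E=e(u)I_n$ and $P(E)$ being $\phi$-flat. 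Proposition~\ref{prop:main} now gives $c\phi[P(D)]\le a_1\phi_1[P(B_1)]+a_2\phi_2[P(B_2)]$, which is the required inequality.

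\textbf{Main obstacle.} The step needing the most care is the clearing of denominators. The row-cleared $T_i$ must be square and invertible over $\mathcal D$ so that its polytope is a genuine single polytope. This follows because $D$ and the $B_i$ are invertible, as they have full rank. One must also check that $E$ can be chosen to involve only elements of $\ker\phi$, so that $P(E)$ is $\phi$-flat. Both points follow because $R_\phi$ is an Ore localization of $R$ at the nonzero elements of $K*\ker\phi$.
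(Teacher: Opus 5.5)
Your proposal is correct and follows the paper's proof essentially step for step. You reduce to the integer triangle inequality via Thurston and Kapovich, and realize $v_A(\phi_i)$ as $\phi_i[P(B_i)]$ for denominator-cleared GRCDs $B_i$ over $R_{\phi_i}$. You then take a GRCD $D$ over $R_\phi$ of the stacked matrix to get $v_A(\phi)\le\phi[P(D)]$. Clearing denominators in $TD=B$ by a $\phi$-flat element gives $P(D)\le P(B_i)+P(E)$, and you conclude with Proposition~\ref{prop:main} for $k=2$.

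One wording slip: the scalar used with Lemma~\ref{lemma:grcd-integer} must be a common left denominator lying in $K\ast\ker\phi$, hence a unit of $R_\phi$, not an arbitrary element of $\mathcal D$. Your final paragraph in effect says this.
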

This result directly implies the initial conjecture, which we restate here.
\begin{cor}[{Conjecture~\ref{conj:main}}]\label{cor:main}
    Let $G$ be a finitely generated group satisfying the Atiyah conjecture and let $C_\blt$ be a finitely generated free $L^2$-acyclic $\ZZ G$-chain complex. Then, for every $n\ge 0$, the twisted $L^2$-Betti number $b_n^{(2)}(C_\blt; {-})$ is a polyhedral seminorm.
\end{cor}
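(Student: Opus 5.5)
The plan is to reduce Corollary~\ref{cor:main} to Theorem~\ref{thm:seminorm}, using the discussion at the start of this section. Assuming the Atiyah conjecture, $G$ is torsion-free, at least in the relevant setting where the Linnell skew field $\mathcal D(G)$ exists. Let $\ab\colon G\surj H$ be the universal free abelian quotient and set $K\coloneqq\mathcal D(\ker\ab)$. By~\cite[Section~6.1]{l2thur}, $\mathcal D(G)=\operatorname{Ore}(R)$ with $R=K\ast H$. Moreover, for each primitive character $\phi$ (every character of $G$ factors through $H$), we have $D_\phi\simeq\operatorname{Ore}(K\ast\ker\phi)$. The twisted $L^2$-Betti number is
\begin{equation}
b_n^{(2)}(C_\blt;\phi)=\dim_{D_\phi}H_n(R_\phi\otimes_{\ZZ G}C_\blt),
\end{equation}
which is the standard identification of twisted $L^2$-Betti numbers under the Atiyah conjecture from~\cite{twi-l2}. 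The cases $\rk H\le 1$ are trivial, so assume $\rk H\ge2$.

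First, since $R_\phi$ is a localization of $R$ and localization is exact~\cite[Proposition~II.3.5]{ore-flat}, homology commutes with $R_\phi\otimes_R-$. This gives $b_n^{(2)}(C_\blt;\phi)=\dim_{D_\phi}(R_\phi\otimes_R M)$ with $M\coloneqq H_n(R\otimes_{\ZZ G}C_\blt)$.

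Second, $R$ is Noetherian by~\cite[Lemma~10.55~(2)]{l2}, so $M$ is finitely presented as a left $R$-module, $M\cong\coker A$ for some $A\in\mathrm M(m,n,R)$. We need $A$ to have full rank $n$ over $\mathcal D(G)$ with $m\ge n$. This is where $L^2$-acyclicity enters: under the Atiyah conjecture, $L^2$-acyclicity means $\mathcal D(G)\otimes_{\ZZ G}C_\blt$ is acyclic. Since $\mathcal D(G)$ is a localization of $R$, this gives $\mathcal D(G)\otimes_R M=0$. Then $\mathcal D(G)\otimes A$ is surjective onto $\mathcal D(G)^n$, so its rank is $n$, which forces $m\ge n$. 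The relation submodule can be taken finitely generated by Noetherianity, with the generators forming the rows of $A$.

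Third, by right exactness of tensor, $R_\phi\otimes_R M\cong\coker(R_\phi\otimes_R A)$. Hence $b_n^{(2)}(C_\blt;\phi)=v_A(\phi)$ on primitive characters, and Theorem~\ref{thm:seminorm} shows that this extends to a polyhedral seminorm on $\Hom(H,\RR)=\Hom(G,\RR)$. The homogeneous extension agrees with the definition of twisted $L^2$-Betti numbers for non-primitive integer characters, which scale by the index.

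The main obstacle is bookkeeping, not substance. Two points need checking. The first is that the twisted $L^2$-Betti number as defined in~\cite{twi-l2} really coincides with the $D_\phi$-dimension over the skew Laurent ring $D_\phi[u^\pm]$; this should follow from the Atiyah conjecture for $\ker\phi$, which holds because $\ker\phi\le G$. The second is that the full-rank hypothesis genuinely follows from $L^2$-acyclicity. I would verify both by citing the corresponding statements in~\cite{twi-l2} and~\cite{l2thur}.
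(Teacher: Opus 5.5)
Your proposal is correct and follows the paper's own reduction essentially step for step. You pass to $R=K\ast H$ and use exactness of the localization $R\to R_\phi$ to write the twisted Betti number as $\dim_{D_\phi}(R_\phi\otimes_R M)$ with $M=H_n(R\otimes_{\ZZ G}C_\blt)$; you then present $M$ as $\coker A$ by Noetherianity and invoke Theorem~\ref{thm:seminorm}. Your derivation of the full-rank condition and of $m\ge n$ (from $\mathcal D(G)\otimes_R M=0$, via $L^2$-acyclicity and flatness of Ore localization) makes explicit a step the paper only asserts.
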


This result was also proved in a recent paper of Jaikin-Zapirain, Kudlinska and Sánchez-Peralta~\cite{jzksp}. Their approach is to bound the global dimension of $R$, in order to ensure that the presentation matrix $A$ can be taken to be square, and then to apply Kielak's single polytope theorem. Our proof relies on the GRCD construction, which may be of independent interest, and can be useful whenever a rectangular presentation matrix is already given, as we shall see with Theorem~\ref{thm:dual-poly}.

\section{The dual polytope}\label{sec:dual-poly}

In order to understand the combinatorics of the seminorm $v$, we study the equality case of the triangle inequality. Let $k$ be the rank of the group $H$.

Given a primitive character $\alpha\colon H \to \mathbb Z$, define $f_A(\alpha) \subseteq \mathcal P_T(H)$ as
\begin{equation}
    f_A(\alpha) \coloneqq P(\operatorname{grcd}_{R_\alpha} A) + \mathcal P_T(\ker \alpha).
\end{equation}
This expression is well defined: by Lemma~\ref{lemma:unique-grcd}, for any two GRCDs $D, D'$ of $A$ over $R_\alpha$, there exist $Z, Z' \in \mathrm{M}(n,R_\alpha)$ such that $D' = ZD$, $D = Z'D'$. As $D$ has the same image as $A$ over $R_\alpha$, it is invertible over $\mathcal D(G)$ and hence $Z'Z = I_n$. By clearing denominators and applying the single polytope theorem, we find $P(Z), P(Z') \in \mathcal P_T(\ker \alpha)$.

As a consequence, with a slight abuse of notation, we have the formula
\begin{equation}\label{eq:formula}
    v_A(\alpha) = \alpha[f_A(\alpha)]
\end{equation}
for all primitive characters $\alpha$.

Consider now $r$ linearly independent primitive characters $\phi_i \colon H \to \mathbb Z$ in the same cone over a facet of the unit ball of $v$, let $a_1, \dots, a_n$ be positive integers, and let $a_1\phi_1 +\dots + a_n\phi_n = c\phi$ for $c \ge 1$ and $\phi$ primitive. Then we have $cv_A(\phi) = a_1v_A(\phi_1) + \dots + a_nv_A(\phi_n)$. By generalizing the construction of the previous section, we can find $B_1, \dots, B_n \in \mathrm{M}(n, R)$ GRCDs of $A$ over the rings $R_{\phi_i}$, then construct a block matrix $B \in \mathrm{M}(kn, n, R)$, and define $D \in \mathrm{M}(n, R)$ as a GRCD of $B$ over $R_\phi$.

Then, in inequality~(\ref{eq:ineq-hermite}), equality holds: $\dim_{D_\phi} \coker({R_\phi} \otimes A) = \dim_{D_\phi} \coker({R_\phi} \otimes D)$. Since it is known that $\image_{R_\phi}(D) \subseteq \image_{R_\phi}(A)$, we deduce $\image_{R_\phi}(D) = \image_{R_\phi}(A)$. Hence, by Lemma~\ref{lemma:hermite}, $D$ is a GRCD of $A$ over $R_\phi$.

We shall denote by $\poly D, \poly B_i$ the Newton polytopes of the corresponding matrices. By the equality conditions of Proposition~\ref{prop:main}, we have $\poly D - F_\phi(\poly D) \in \poly B_i + \mathcal P_T(\ker \phi_i) = f_A(\phi_i)$ for all $i$. In other words, the intersection $f_A(\phi) \cap f_A(\phi_1) \cap \dots \cap f_A(\phi_k)$ is non-empty. Moreover, by Proposition~\ref{prop:inters}, any two elements of this intersection differ by an element of
\begin{equation}
    \mathcal P_T(\ker \phi_1) \cap \dots \cap \mathcal P_T(\ker \phi_k) = \mathcal P_T( \ker \phi_1 \cap \dots \cap \ker \phi_k) = \mathcal P_T(0) = 0.
\end{equation}
Hence, there is a unique element
\begin{equation}
    \poly D - F_\phi(\poly D) \in f_A(\phi) \cap f_A(\phi_1) \cap \dots \cap f_A(\phi_k).
\end{equation}
It is not hard to see that, if $F$ is a facet of the unit ball of $v$, then the intersection of all $f_A(\alpha)$ for $\alpha$ primitive in the closed cone over $F$ contains a unique polytope, which we call $P_F \in \mathcal P_T(H)$. Indeed, by varying the coefficients $a_i$, we obtain $P_F \in f_A(\alpha)$ for all $\alpha$ in the open cone spanned by the $\phi_i$, and by gradually varying each $\phi_i$ one at a time, we obtain the entire closed cone over $F$. We shall now prove that all the $P_F$ are the same.

We say that two facets $F, F'$ of the unit ball of $v$ are \emph{adjacent} if the closed cones over them intersect in a $(k-1)$-dimensional cone $C(F, F')$. The adjacency graph of the facets is simply the $1$-skeleton of the dual unit ball, and as such, it is connected.

Let $F_1, F_2$ be adjacent facets. We shall prove that $P_{F_1} = P_{F_2}$. To this end, consider a sequence $F_1, F_2, \dots, F_{s+1}$ of facets such that $F_{s+1}$ is opposite to $F_1$, and the supporting hyperplanes of the cones $C(F_i, F_{i+1})$ are pairwise distinct for $i = 1, \dots, s$. Such a sequence can be obtained by connecting the two vertices dual to $F_1, F_{s+1}$ in the $1$-skeleton of the dual unit ball with pairwise non-parallel edges.

Now $P_{F_{i+1}}-P_{F_{i}}$ is an element of $\mathcal P_T(\ker \alpha)$ for every primitive character $\alpha \in C_i \coloneqq C(F_i, F_{i+1})$. The intersection of these groups is given by $\mathcal P_T(K_i)$, where $K_i$ is the $1$-dimensional subgroup of $H$ dual to $C_i$. Note that all the $K_i$ have pairwise trivial intersection, and hence we can find a primitive character $\psi$ whose kernel contains $K_1$ and intersects $K_2, \dots, K_s$ trivially. This implies that $F_\psi\colon \mathcal P_T(K_i) \to \mathcal P_T(K_i)$ is the identity for $i=1$ and zero for $i > 1$.
By definition of $f_A$, we also have $P_{F_{s+1}} = P_{F_1}$. Hence:
\begin{equation}
    \sum_{i = 1}^s (P_{F_{i+1}}-P_{F_{i}}) = 0,
\end{equation}
and by applying $F_\psi$, we obtain $P_{F_1} = P_{F_2}$.

Hence, there exists a single $P(A) \in \mathcal P_T(H)$ such that $v_A(\phi) = \phi[P(A)]$ for all $\phi \colon H \to \ZZ$. By~\cite[Lemma~4.7]{funke-am}, $P(A)$ is actually a single polytope in $\mathfrak P_T(H)$, which describes the seminorm completely; this can be seen as a generalization of~\cite[Theorem~3.14]{kielak-bns} to rectangular matrices. We can summarize the discussion in this section as follows.

\begin{thm}\label{thm:dual-poly}
    Let $K$ be a skew field, let $H$ be a finitely generated free abelian group, and let $R$ be a crossed product ring $K\ast H$ with skew field of fractions $\mathcal D$. Let $A\in \mathrm{M}(m, n, R)$ (with $m \ge n$) be a matrix of full rank over $\mathcal D$. Then there exists a polytope $P(A) \in \mathfrak P_T(H)$, whose thickness function is the seminorm $v_A$ of Theorem~\ref{thm:seminorm}, and such that for every primitive character $\phi\colon H \twoheadrightarrow \mathbb Z$, we have
    \begin{equation}
        P(\operatorname{grcd}_{R_\phi} A) = P(A) + E
    \end{equation}
    for some $E \in \mathcal P_T(\ker \phi)$, a difference of $\phi$-flat polytopes.
\end{thm}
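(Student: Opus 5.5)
The proof assembles the discussion of Section~\ref{sec:dual-poly}. The central object is the coset-valued function
\begin{equation}
    f_A(\alpha) = P(\operatorname{grcd}_{R_\alpha} A) + \mathcal P_T(\ker\alpha).
\end{equation}
The goal is to show that a single element $P(A)\in\mathcal P_T(H)$ lies in $f_A(\alpha)$ for every primitive $\alpha$. Granting this, $v_A(\alpha)=\alpha[P(A)]$ by formula~(\ref{eq:formula}), because $\alpha$-thickness vanishes on $\mathcal P_T(\ker\alpha)$. Theorem~\ref{thm:seminorm} then identifies the thickness function with $v_A$, and the lemma of Funke (\cite[Lemma~4.7]{funke-am}) upgrades $P(A)$ from a formal difference to an actual polytope in $\mathfrak P_T(H)$. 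The claimed identity $P(\operatorname{grcd}_{R_\phi}A)=P(A)+E$ with $E\in\mathcal P_T(\ker\phi)$ is then just the statement $P(A)\in f_A(\phi)$.

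First, I would check that $f_A$ is well defined. By Lemma~\ref{lemma:unique-grcd}, two GRCDs over $R_\alpha$ differ by matrices $Z,Z'$ with $Z'Z=I_n$. After clearing denominators, the single polytope theorem shows that $P(Z)$ is $\alpha$-flat.

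Second, I would fix a facet $F$ of the unit ball of $v_A$ and take linearly independent primitive characters $\phi_1,\dots,\phi_k$ in the cone over $F$. For positive integers $a_i$, write $\sum a_i\phi_i=c\phi$ with $c\ge 1$ and $\phi$ primitive; linearity of $v_A$ on this cone gives equality in the triangle inequality. I then repeat the construction of Section~\ref{sec:tri-ineq}: stack GRCDs $B_i$ of $A$ over $R_{\phi_i}$ into a matrix $B$, and let $D$ be a GRCD of $B$ over $R_\phi$. Equality in~(\ref{eq:ineq-hermite}) gives $\image_{R_\phi} D=\image_{R_\phi} A$, so Lemma~\ref{lemma:hermite} makes $D$ a GRCD of $A$ over $R_\phi$. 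The equality clause of Proposition~\ref{prop:main}, for general $k$, then puts $P(D)-F_\phi(P(D))$ in every $f_A(\phi_i)$ as well as in $f_A(\phi)$. Proposition~\ref{prop:inters} shows this common element is unique, since the intersection of the $\mathcal P_T(\ker\phi_i)$ is trivial.

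Third, I would spread this element over the whole cone. Varying the $a_i$ covers the open cone spanned by the $\phi_i$, and moving the $\phi_i$ one at a time covers the closed cone over $F$. Uniqueness glues these pieces into a single element $P_F$.

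The main obstacle is showing that $P_F$ does not depend on $F$. Since the adjacency graph of facets is connected, it suffices to treat adjacent facets $F_1,F_2$. I would go around from $F_1$ to the opposite facet $F_{s+1}$ through walls $C_i=C(F_i,F_{i+1})$ with pairwise non-parallel supporting hyperplanes. Each difference $P_{F_{i+1}}-P_{F_i}$ lies in $\mathcal P_T(K_i)$, where $K_i$ is the rank-one subgroup dual to $C_i$. Since $f_A(-\alpha)=f_A(\alpha)$, we have $P_{F_{s+1}}=P_{F_1}$, so these differences telescope to zero. I would then choose $\psi$ with $K_1\subseteq\ker\psi$ and $K_i\cap\ker\psi=0$ for $i>1$. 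Applying $F_\psi$ keeps only the first difference, which forces $P_{F_1}=P_{F_2}$; the delicate point is checking that such a $\psi$ exists.
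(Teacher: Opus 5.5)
Your proposal is correct and follows the paper's argument essentially step by step. The steps match: well-definedness of $f_A$; the equality case of the triangle inequality on a facet cone, where $D$ becomes a GRCD of $A$ over $R_\phi$ and the equality clause of Proposition~\ref{prop:main} yields the unique element $P(D)-F_\phi(P(D))$; the extension to $P_F$ on the closed cone; the telescoping around to the opposite facet with $F_\psi$; and finally Funke's lemma. The existence of $\psi$ that you flag is routine, as the paper indicates: the $K_i$ are pairwise distinct rank-one subgroups, so the integer characters vanishing on $K_1$ form a rank-$(k-1)$ lattice that is not covered by the finitely many rank-$(k-2)$ sublattices $\{\psi : \psi(K_i)=0\}$. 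For such a $\psi$, $F_\psi$ is the identity on $\mathcal P_T(K_1)$ and kills $\mathcal P_T(K_i)$ for $i>1$, because a segment that is not $\psi$-flat has a point as its $\psi$-face.
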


\begin{remark}
    By Proposition~\ref{prop:inters}, the polytope $P(A)$ is determined by the GRCDs of $A$ over $R_\phi$, for $\phi$ ranging over any set of primitive characters generating $\Hom(H, \RR)$.
\end{remark}

\begin{remark}
    In~\cite{jzksp}, the authors define a notion of a $\mathcal P(H)$-valued \emph{Newton polytope} that applies to any finitely generated $R$-module $M$ such that $\mathcal D(G) \otimes_R M = 0$. It would be interesting to investigate the relationship between this invariant $P(M)$ and the $P(A)$ defined above, for $A$ a presentation matrix of $M$. If equal (up to translation), this would imply that $P(M)$ is a single polytope, answering Problem~3 of~\cite{jzksp} in the affirmative.
\end{remark}

\section{Geometric bounds and arithmetic manifolds}\label{sec:twisted-singer}
The purpose of this section is to estimate the twisted $L^2$-Betti numbers with the ordinary $L^2$-Betti numbers of appropriate hypersurfaces, and to apply the bounds thus obtained, together with the results of the previous sections, to prove our main result (Theorem~\ref{thm:main-final}) on a class of arithmetic manifolds.

First, we show that Conjecture~\ref{conj:main} holds for closed arithmetic manifolds of simplest type independently of the Atiyah conjecture:

\begin{thm}\label{thm:arithm-seminorm}
    Let $M$ be a closed arithmetic hyperbolic manifold of simplest type. Then the $L^2$-Betti numbers $b_i^{(2)}(\widetilde M; -)$ are polyhedral seminorms for all $i \ge 0$.
\end{thm}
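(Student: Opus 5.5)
The plan is to reduce Theorem~\ref{thm:arithm-seminorm} to Corollary~\ref{cor:main}, or more precisely to Theorem~\ref{thm:seminorm}, by replacing the Atiyah conjecture with a virtual structural property of $\pi_1(M)$.

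\textbf{Virtual RFRS and passage to a cover.} By Bergeron--Haglund--Wise, closed arithmetic hyperbolic manifolds of simplest type are virtually special. Hence $\Gamma = \pi_1(M)$ has a finite-index subgroup $\Gamma'$ that embeds in a right-angled Artin group, so $\Gamma'$ is torsion-free and RFRS. By Schick and Linnell, such groups satisfy the Atiyah conjecture. So the first step is to pass to a finite cover $M' \to M$ with $\pi_1(M') = \Gamma'$.

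\textbf{Behaviour under finite covers.} I then check that twisted $L^2$-Betti numbers behave well under finite covers. For $\phi \colon \Gamma \to \ZZ$ and the restriction $\phi' = \phi|_{\Gamma'}$, I expect $b_i^{(2)}(\widetilde M'; \phi') = [\Gamma:\Gamma']\, b_i^{(2)}(\widetilde M; \phi)$, up to the normalization used for non-primitive characters. This should follow from the induction/restriction properties of twisted $L^2$-Betti numbers established in~\cite{twi-l2}, which are analogous to multiplicativity of ordinary $L^2$-Betti numbers. Since restriction $H^1(M;\RR)\to H^1(M';\RR)$ is an injective linear map, a polyhedral seminorm on $H^1(M';\RR)$ pulls back to a polyhedral seminorm on $H^1(M;\RR)$. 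Therefore it suffices to treat $M'$.

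\textbf{Applying Corollary~\ref{cor:main}.} Take $C_\blt$ to be the cellular chain complex of $\widetilde{M'}$ for a finite CW structure; it is a finitely generated free $\ZZ\Gamma'$-complex. Corollary~\ref{cor:main} requires $L^2$-acyclicity, which fails for even-dimensional $M$. Two cases arise.
\begin{itemize}
\item If $\dim M$ is odd, $L^2$-acyclicity holds, so Corollary~\ref{cor:main} applies directly.
\item If $\dim M$ is even and $b_1(M')>0$, the ordinary $L^2$-Betti numbers are nonzero. I would instead use the argument of Section~\ref{sec:tri-ineq} directly: we have $R_\phi\otimes_R H_i(R\otimes C_\blt)=H_i(R_\phi\otimes C_\blt)$ and $\dim_{D_\phi}$ of this twisted homology. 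If $H_i(R\otimes C_\blt)$ has positive $\mathcal D$-rank, its free part contributes infinite $D_\phi$-dimension. This is consistent with the convention that the twisted Betti number is then infinite, or is defined only for $L^2$-acyclic complexes. Equivalently, the hypothesis of Conjecture~\ref{conj:main} is exactly what makes these numbers finite.
\end{itemize}
I would therefore phrase the statement via the torsion part of $H_i(R\otimes C_\blt)$, which is presented by a full-rank matrix, and apply Theorem~\ref{thm:seminorm} to it.

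\textbf{Main obstacle.} The delicate point is the finite-cover compatibility, namely that restriction of characters intertwines the twisted $L^2$-Betti number functions with the factor $[\Gamma:\Gamma']$. I would first try to prove it via the identification $\mathcal D(\ker\phi')$-modules $\leftrightarrow$ $\mathcal D(\ker\phi)$-modules, using $\ker\phi'$ of finite index in $\ker\phi$ and the index of $\phi'(\Gamma')$ in $\ZZ$. If that is awkward, I would fall back on the von Neumann dimension description of~\cite{twi-l2}, where multiplicativity is standard.
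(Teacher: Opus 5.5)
Your argument is correct, but it takes a different route from the paper's. The paper's proof works at the level of the group. By Bergeron--Haglund--Wise, $\pi_1(M)$ is virtually cocompact special. Schreve's theorem~\cite[Theorem~1.2]{virt-spec} then gives the Atiyah conjecture for $\pi_1(M)$ itself, and Corollary~\ref{cor:main} applies directly to $C_\blt(\widetilde M)$.

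You instead use the Atiyah conjecture only for the special finite-index subgroup $\Gamma'$. This is available, since $\Gamma'$ is a subgroup of a right-angled Artin group. You apply Corollary~\ref{cor:main} to $M'$, and then descend via $b_i^{(2)}(\widetilde{M'};\phi|_{\Gamma'}) = [\Gamma:\Gamma']\, b_i^{(2)}(\widetilde M;\phi)$ and pullback of a polyhedral seminorm along $H^1(M;\RR)\to H^1(M';\RR)$.

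This sidesteps exactly the nontrivial content of Schreve's paper, namely passing the Atiyah conjecture from $\Gamma'$ to the finite extension $\Gamma$. It replaces it with the elementary multiplicativity under finite covers, which the paper itself uses in Theorem~\ref{thm:twisted-singer}. The price has two parts:
\begin{itemize}
\item For $M$ you must use the von Neumann-dimension definition of $b_i^{(2)}(\widetilde M;\phi)$, since no $\mathcal D(\ker\phi)$ is available.
\item A priori you only get values in $\frac{1}{[\Gamma:\Gamma']}\ZZ$. This is still a polyhedral seminorm, as the statement requires. However, you lose the integrality, and the integral dual polytope of Theorem~\ref{thm:dual-poly}, that the paper's route gives for $\pi_1(M)$ and reuses later.
\end{itemize}

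Two remarks on execution.

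First, your initial idea for the multiplicativity, comparing $\mathcal D(\ker\phi')$-modules with $\mathcal D(\ker\phi)$-modules, presupposes the Atiyah conjecture for $\ker\phi\le\Gamma$. Your route does not provide that, so use your fallback instead:
\begin{itemize}
\item $\ker\phi'$ has index $[\Gamma:\Gamma']/[\ZZ:\phi(\Gamma')]$ in $\ker\phi$.
\item Restriction multiplies von Neumann dimension by this index.
\item The remaining factor $[\ZZ:\phi(\Gamma')]$ is absorbed by the homogeneous normalization for the non-primitive character $\phi'$.
\end{itemize}
This computation also yields finiteness of $b_i^{(2)}(\widetilde M;\phi)$.

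Second, you are right that in even dimension the middle twisted Betti number is infinite. The paper's proof, which goes through Corollary~\ref{cor:main}, implicitly assumes $L^2$-acyclicity, i.e.\ odd dimension. In dimension $2n$, no torsion-part reformulation is needed for $i\neq n$: there $\mathcal D(\Gamma')\otimes_R H_i(R\otimes_{\ZZ\Gamma'} C_\blt)=0$ already, so Theorem~\ref{thm:seminorm} applies verbatim. For $i=n$, the torsion part does not compute $b_n^{(2)}(\widetilde M;\phi)$.
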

\begin{proof}
    The group $\pi_1(M)$ is virtually special~\cite{arith-virt-spec} and cocompact, and hence satisfies the Atiyah conjecture by~\cite[Theorem~1.2]{virt-spec}. The statement follows by Corollary~\ref{cor:main}.
\end{proof}

Let us now prove an estimate for the twisted $L^2$-Betti numbers:
\begin{prop}\label{prop:mayer-vietoris}
    Let $M$ be a closed orientable manifold such that $\pi_1(M)$ satisfies the Atiyah conjecture, and let $F$ be a $\pi_1$-injective embedded hypersurface dual to a primitive cohomology class $\phi \in H^1(M; \mathbb Z)$. If $M$ is $L^2$-acyclic, then the twisted $L^2$-Betti numbers of $M$ are well defined and we have
    \begin{equation}
        b_i^{(2)}(\widetilde M; \phi) \le b_i^{(2)}(\widetilde F).
    \end{equation}
\end{prop}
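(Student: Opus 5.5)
Cut $M$ along $F$ to obtain a compact manifold $N$ with two boundary copies $F_\pm$ of $F$. Since $F$ is dual to the primitive class $\phi$, the infinite cyclic cover $\overline M \to M$ associated with $\phi$ is a bi-infinite chain of copies $t^jN$ of $N$, glued along copies $t^jF$ of $F$. Let $G=\pi_1(M)$, $\Gamma=\ker\phi$ and $D_\phi=\mathcal D(\Gamma)$. By the Atiyah conjecture and the discussion in Section~\ref{sec:tri-ineq}, $b_i^{(2)}(\widetilde M;\phi)=\dim_{D_\phi} H_i(D_\phi\otimes_{\ZZ\Gamma} C_\blt(\widetilde M))$, where $C_\blt(\widetilde M)$ is regarded as a $\ZZ\Gamma$-complex. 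This complex is not finitely generated over $\ZZ\Gamma$, but it is finitely generated free over $\ZZ G=\ZZ\Gamma[u^\pm]$. Moreover, $D_\phi\otimes_{\ZZ\Gamma}C_\blt(\widetilde M)$ is the complex $D_\phi[u^\pm]\otimes_{\ZZ G}C_\blt(\widetilde M)$, viewed as a $D_\phi$-complex. $L^2$-acyclicity of $M$ makes its homology $D_\phi[u^\pm]$-torsion, hence finite-dimensional over $D_\phi$, so the invariants are well defined.

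The main step is a Mayer--Vietoris / Wang-type short exact sequence of $\ZZ G$-chain complexes, obtained by choosing a $G$-CW structure in which $F$ is a subcomplex:
\begin{equation}
0 \to \ZZ G\otimes_{\ZZ\pi_1F} C_\blt(\widetilde F) \xrightarrow{\;\iota_+ - u\,\iota_-\;} \ZZ G\otimes_{\ZZ\pi_1 N}C_\blt(\widetilde N) \to C_\blt(\widetilde M)\to 0 .
\end{equation}
Here $\pi_1$-injectivity of $F$ (and hence of $N$) guarantees that the universal covers of $F$ and $N$ embed in $\widetilde M$, so the induced complexes are the correct ones. Tensoring with $D_\phi[u^\pm]$ over $\ZZ G$ preserves exactness, since the complexes are free. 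Observe that $D_\phi[u^\pm]\otimes_{\ZZ G}\ZZ G\otimes_{\ZZ\pi_1F}C_\blt(\widetilde F)\cong D_\phi[u^\pm]\otimes_{D_\phi}(D_\phi\otimes_{\ZZ\pi_1F}C_\blt(\widetilde F))$, because $\pi_1F\le\Gamma$. Similarly for $N$, because $\pi_1 N\le \Gamma$.

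Next compute the $D_\phi$-homology of the pieces. Since $\pi_1F\le\Gamma$ satisfies the Atiyah conjecture, $D_\phi\otimes_{\ZZ\pi_1F}C_\blt(\widetilde F)$ has homology of $D_\phi$-dimension equal to $b_i^{(2)}(\widetilde F)$. This follows from the fact that $\mathcal D(\pi_1F)\subseteq D_\phi$ is a skew field extension, so dimensions are preserved. Tensoring with $D_\phi[u^\pm]$ therefore yields free $D_\phi[u^\pm]$-modules $H_i^F\cong D_\phi[u^\pm]^{b_i^{(2)}(\widetilde F)}$. For $N$ we obtain $D_\phi[u^\pm]$-modules $H_i^N$ which may have torsion, but $b_i^{(2)}(\widetilde N)$ is their rank.

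The long exact sequence gives
\begin{equation}
\cdots\to H_i^F \xrightarrow{f_i} H_i^N \to H_i(M;D_\phi[u^\pm]) \to H_{i-1}^F\xrightarrow{f_{i-1}} H_{i-1}^N\to\cdots.
\end{equation}
Since the middle term is torsion, i.e.\ finite-dimensional over $D_\phi$, the map $\ker f_{i-1}$ into the free module $H^F_{i-1}$ must have torsion image, and so $f_{i-1}$ is injective. Hence $H_i(M;D_\phi[u^\pm])\cong\coker f_i$, and we need $\dim_{D_\phi}\coker f_i\le b_i^{(2)}(\widetilde F)$.

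This bound is the main obstacle. The plan is to use the explicit form $f_i=\iota_+-u\iota_-$, with $\iota_\pm$ defined over $D_\phi$, on free modules of $D_\phi$-rank $b_i(F)$, and to compare $\coker f_i$ with $H_i^N$ through its $D_\phi[u^\pm]$-rank. Equivalently, use the Euler characteristic: $\coker f_i$ torsion forces $\rk H_i^N\le\rk H_i^F$ in a controlled way. Work with a presentation of $\coker f_i$ by a square matrix. Because $f_i$ has degree at most $1$ in $u$ and its domain has rank $b_i^{(2)}(\widetilde F)$, the cokernel is generated, modulo torsion-free rank zero, by at most $b_i^{(2)}(\widetilde F)$ elements. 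Its $D_\phi$-dimension is then the $u$-degree of a Dieudonné determinant of a square block of size at most $b_i^{(2)}(\widetilde F)$ whose entries are linear in $u$, and this gives the bound. Pinning down this degree count carefully, presumably via a Jacobson normal form over the PID $D_\phi[u^\pm]$ and the fact that $H^N_i$ might be generated by the images of $H_i^{N}$ in the cover, is where I expect the real work.
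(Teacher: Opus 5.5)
Your reduction is correct and takes a different, more algebraic route than the paper, but the proposal stops exactly where the content of the proposition lies. The inequality $\dim_{D_\phi}\coker f_i\le b_i^{(2)}(\widetilde F)$ is never proved, and the heuristics you offer for it do not work. A bound on the number of $D_\phi[u^\pm]$-generators of $\coker f_i$ gives no bound on its $D_\phi$-dimension: a cyclic module $D_\phi[u^\pm]/D_\phi[u^\pm]p$ can have arbitrarily large $D_\phi$-dimension. The degree bound for the Dieudonné determinant is only asserted. Moreover, your remark that $H_i^N$ may have torsion is false, and its freeness is exactly what you need. Since $\pi_1N\le\Gamma$, the argument you gave for $F$ shows $H_i^N\cong D_\phi[u^\pm]\otimes_{D_\phi}V_N$ with $V_N=H_i(D_\phi\otimes_{\ZZ\pi_1N}C_\blt(\widetilde N))$. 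As $f_i$ is injective with torsion cokernel, $\dim V_N=\dim V_F=b\coloneqq b_i^{(2)}(\widetilde F)$. In bases of $V_F,V_N$, the map $f_i$ is therefore right multiplication by a $b\times b$ matrix $A$ with entries in $D_\phi+D_\phi u$, invertible over $\mathcal D(G)$. Without freeness of $H_i^N$ you do not even have this square linear presentation.

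The missing idea is the one at the heart of the paper's proof: finite-dimensionality combined with the shift by $u$. Let $Q=\coker f_i$. Multiplication by $u$ is a bijection $Q\to Q$, semilinear with respect to conjugation by $u$ on $D_\phi$. So if $\bar x_1,\dots,\bar x_r$ span $Q$ over $D_\phi$, then so do $u^m\bar x_1,\dots,u^m\bar x_r$, and for $m\gg0$ these are images of elements of $D_\phi[u]^b$. Hence $D_\phi[u]^b\to Q$ is onto. Let $L_m\subseteq D_\phi[u]^b$ be the row vectors of degree at most $m$, so $\dim_{D_\phi}L_m=(m+1)b$. Since the entries of $A$ have degree at most $1$ and $x\mapsto xA$ is injective, $L_{m-1}A$ is an $mb$-dimensional subspace of $L_m$ that maps to zero in $Q$. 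Hence the image of $L_m$ in $Q$ has dimension at most $b$. These images increase and exhaust $Q$, so $\dim_{D_\phi}Q\le b$, as required.

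The paper runs the same idea geometrically and avoids matrices altogether. It cuts the infinite cyclic cover along a single lift of $F$ into two one-ended halves $N_L,N_R$. It uses the deck transformation to show that $H_i$ of each half, with $\mathcal D(\ker\phi)$-coefficients, surjects onto $H_i$ of the cover, so the Mayer--Vietoris connecting maps vanish. It then reads off $2\,b_i^{(2)}(\widetilde M;\phi)\le b_i^{(2)}(\widetilde M;\phi)+b_i^{(2)}(\widetilde F)$. Your Wang-sequence version, once completed as above, has the small advantage of exhibiting $H_i(M;D_\phi[u^\pm])$ explicitly as the cokernel of a degree-one matrix in $u$ of size $b_i^{(2)}(\widetilde F)$.
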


\begin{proof}
Let $K \coloneqq \ker \phi$, $S \coloneqq \pi_1(F)$, and let $N$ be the infinite cyclic covering of $M$ associated to $\ker \phi$.
Since $F$ lifts to $N$, $S$ is a subgroup of $K$, and the Atiyah conjecture holds for both groups. Assume that $M$ has a regular CW complex structure compatible with that of $S$.

Now we have:
\begin{align*}
    b_i^{(2)}(\til M; \phi) = b_i^{(2)}(\til N) &= \dim_{\mathcal D(K)} H_i(\mathcal D(K) \otimes_{\ZZ K} C_\blt(\til N));
\\  b_i^{(2)}(\til F) &= \dim_{\mathcal D(S)} H_i(\mathcal D(S) \otimes_{\ZZ S} C_\blt(\til F)).
\end{align*}
Since $\mathcal D(K)$ is a flat right $\mathcal D(S)$-module, we can rewrite this as:
\begin{align*}
    b_i^{(2)}(\til M; \phi) &=
    \dim_{\mathcal D(K)} H_i(\mathcal D(K) \otimes_{\ZZ K} C_\blt(\til N));
\\  b_i^{(2)}(\til F)  &=
    \dim_{\mathcal D(K)} H_i(\mathcal D(K) \otimes_{\ZZ S} C_\blt(\til F)).
\end{align*}

Of course, these are simply homology modules with local coefficients in $\mathcal D(K)$, seen first as a $\ZZ K$-module and then as a $\ZZ S$-module. Now construct $N$ in the usual way by gluing infinitely many copies of $M$ cut along $F$. A copy of $F$ splits $N$ into two halves $N_L, N_R$, both having one end. If we slightly enlarge them, we can take $\{N_L, N_R\}$ to be an open cover of $N$, with intersection homeomorphic to $F \times (0,1)$.

Define $K_L\coloneqq \pi_1(N_L), K_R\coloneqq \pi_1(N_R)$. Since $S$ injects in $K$, it must inject in both $K_L$ and $K_R$, which gives $K \simeq K_L *_S K_R$ an amalgamated product structure. The usual arguments from normal forms show that $K_L$ and $K_R$ in turn inject into $K$. All these injections induce inclusions between Linnell skew fields.

By Mayer--Vietoris with local coefficients:
\begin{align*}
\dots &\longrightarrow H_{i+1}(\mathcal D(K) \otimes_{\ZZ S} C_\blt(\til N))
\xrightarrow{\delta} H_{i}(\mathcal D(K) \otimes_{\ZZ K} C_\blt(\til F)) \\ & \longrightarrow H_{i}(\mathcal D(K) \otimes_{\ZZ K_L} C_\blt(\til N_L)) \oplus
H_{i}(\mathcal D(K) \otimes_{\ZZ K_R} C_\blt(\til N_R))
\\ &\longrightarrow H_{i}(\mathcal D(K) \otimes_{\ZZ K} C_\blt(\til N))
\xrightarrow{\delta} \dots
\end{align*}
where again the $S$-, $K_L$-, $K_R$-module structures on $\mathcal D(K)$ are induced by the natural inclusions of $\mathbb Z S$, $\mathbb ZK_L$, $\mathbb ZK_R$ into $\mathbb ZK$ and then into $\mathcal D(K)$.

Note that the map $H_{i}(\mathcal D(K) \otimes_{\ZZ K_L} C_\blt(\til N_L))
\to H_{i}(\mathcal D(K) \otimes_{\ZZ K} C_\blt(\til N))$ is surjective.
In fact, by $\phi$-$L^2$-finiteness of $M$, all $L^2$-Betti numbers of $N$ are finite, so the homology module $H_{i}(\mathcal D(K) \otimes_{\ZZ K} C_\blt(\til N))$ is generated by finitely many cycles over $\mathcal D(K)$. By applying the deck automorphism of $N$ enough times, we can send all these cycles into $N_L$. 

These are also valid cycles in $\mathcal D(K) \otimes_{\ZZ K_L} C_\blt(\til N_L)$, as the latter is a subcomplex of $\mathcal D(K) \otimes_{\ZZ K} C_\blt(\til N)$ with compatible boundary maps, so we get surjectivity. 

Of course, an analogous argument can be made for $N_R$. An immediate consequence is that all boundary maps $\delta$ are zero, so the Mayer--Vietoris long exact sequence decomposes into many short exact sequences.
In particular, by taking dimensions over $\mathcal D(K)$, we get:
\begin{align*}
     b_i^{(2)}(\til M; \phi) + b_i^{(2)}(\til F) &= \dim_{\mathcal D(K)} [H_{i}(\mathcal D(K) \otimes_{\ZZ K_L} C_\blt(\til N_L)) \oplus
H_{i}(\mathcal D(K) \otimes_{\ZZ K_R} C_\blt(\til N_R))] \\ &\ge 2b_i^{(2)}(\til M; \phi),
\end{align*}
from which the result follows immediately.
\end{proof}

Due to Agol's virtual fibration theorem, such $\pi_1$-injective hypersurfaces are ubiquitous in $3$-manifolds, but finding them in higher dimensional hyperbolic manifolds appears to be much harder. In this direction, we introduce a result of Bergeron--Millson--Moeglin, which deals with arithmetic manifolds of simplest type corresponding to congruence subgroups of an arithmetic lattice $\mathrm O(f, \mathcal O_k)$ (also called \emph{congruence manifolds}):
\begin{prop}[{see~\cite[Theorem~1.5]{virt-geod}}]\label{prop:bmm}
    Let $M$ be a closed, orientable, congruence arithmetic hyperbolic manifold of simplest type, with $\dim M > 3$. Then $H^1(M; \mathbb Q)$ is spanned by the Poincaré duals of classes of immersed totally geodesic hypersurfaces.
\end{prop}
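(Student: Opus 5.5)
Since this proposition is quoted from Bergeron--Millson--Moeglin, the plan is to reconstruct the outline of their argument rather than find an elementary route. That argument is automorphic, and I expect it to be the only viable one. Write $M = \Gamma\backslash \mathbb H^d$ with $\Gamma$ a congruence subgroup of $\mathrm{O}(f,\mathcal O_k)$, where $f$ has signature $(d,1)$ at one real place and is definite at the others. The first step is to pass to the whole tower of congruence subgroups at once. By Matsushima's formula, $H^1(M;\mathbb C)$ decomposes as a sum over automorphic representations $\pi$ of $\mathrm{O}(f)(\mathbb A)$ occurring in $L^2(\Gamma\backslash G)$, each weighted by the relative Lie algebra cohomology $H^1(\mathfrak g,K;\pi_\infty)$. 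By the Vogan--Zuckerman classification, the only nontrivial unitary representation of $\mathrm{SO}(d,1)$ with $H^1(\mathfrak g,K;\pi_\infty)\neq 0$ is the cohomological module $A_{\mathfrak q}$ with Levi of type $\mathrm{SO}(d-2,1)\times \mathrm{U}(1)$. So the task reduces to understanding the global $\pi$ whose archimedean component at the noncompact place is this module, and trivial at the compact places.

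The second step is the special cycle side. Kudla--Millson constructed a theta correspondence from holomorphic weight $(d+1)/2$ modular forms on the metaplectic (or $\mathrm{SL}_2$) side to closed $1$-forms on $M$. They showed that the classes obtained this way are exactly combinations of Poincaré duals of special cycles, namely the images of $\mathrm{O}(f|_{v^\perp})$-orbits for positive vectors $v$. Those special cycles are immersed totally geodesic hypersurfaces. Hence it suffices to prove that every $\pi$ contributing to $H^1$ lies in the image of the global theta lift from $\mathrm{SL}_2$ (or $\mathrm{Mp}_2$).

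The third step, which I expect to be the main obstacle, is this surjectivity. I would use Arthur's classification (Moeglin's work for orthogonal groups) to write down the Arthur parameter $\psi$ of such a $\pi$. The archimedean component $A_{\mathfrak q}$ forces $\psi$ to contain a summand of the form $\eta\otimes R_{d-1}$, where $R_{d-1}$ is the $(d-1)$-dimensional representation of the Arthur $\mathrm{SL}_2$. When $d-1$ is large relative to the total rank, Moeglin's results show that such $\pi$ are theta lifts from a smaller group, here $\mathrm{SL}_2$/$\mathrm{Mp}_2$. The inequality needed is essentially $1<d/3$, which is where the hypothesis $\dim M>3$ enters. The delicate points are: controlling the local components at finite places, which is where the congruence hypothesis is used; proving nonvanishing of the local theta lifts; and checking that the lifted class is actually the Kudla--Millson class rather than merely lying in the same isotypic component.

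Finally, the proof is assembled by taking $\Gamma$-invariants. Rationality follows because the span of special cycle classes is a $\mathbb Q$-subspace and its complexification is all of $H^1(M;\mathbb C)$, so it equals $H^1(M;\mathbb Q)$.
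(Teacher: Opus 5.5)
Your outline is correct and follows the same route as the paper. The paper gives no argument of its own and simply cites Bergeron--Millson--Moeglin, whose proof is the automorphic strategy you sketch: Matsushima and Vogan--Zuckerman, an $\eta\otimes R_{d-1}$ block in the Arthur parameter, Moeglin's characterization of such $\pi$ as theta lifts from $\mathrm{SL}_2$/$\mathrm{Mp}_2$, Kudla--Millson special cycles, and the descent from $\mathbb C$ to $\mathbb Q$. One adjustment: the congruence hypothesis is already needed in your first step, because it is what lets you express $H^1(M;\mathbb C)$ through $K_f$-fixed vectors of automorphic representations of the adelic group, so that Arthur's and Moeglin's theory applies at all.
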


We shall now prove the following:
\begin{thm}\label{thm:twisted-singer}
    Let $M$ be a closed congruence arithmetic hyperbolic $(2n+1)$-manifold of simplest type. Then the twisted $L^2$-Betti numbers $b_i^{(2)}(\widetilde M; -)$ are identically zero for $i \ne n$.
\end{thm}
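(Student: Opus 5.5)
Since twisted $L^2$-Betti numbers are multiplicative under finite covers, we may pass to a finite cover of $M$. We take a finite congruence cover that is orientable and in which the immersed totally geodesic hypersurfaces of Proposition~\ref{prop:bmm} lift to embedded ones; congruence subgroups of simplest type are separable along such geodesic subgroups. We may assume $2n+1 > 3$, since for $2n+1=3$ the statement already follows from Agol's virtual fibration theorem and the lemma in the introduction. A closed hyperbolic manifold of odd dimension is $L^2$-acyclic, and its fundamental group satisfies the Atiyah conjecture, as in the proof of Theorem~\ref{thm:arithm-seminorm}. By Theorem~\ref{thm:arithm-seminorm}, each $b_i^{(2)}(\widetilde M;-)$ is a polyhedral seminorm on $H^1(M;\RR)$.

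The key step is to produce, for each $i\ne n$, a spanning set of primitive classes on which $b_i^{(2)}$ vanishes. By Proposition~\ref{prop:bmm}, $H^1(M;\mathbb Q)$ is spanned by Poincaré duals of totally geodesic hypersurfaces. After passing to the cover, these are embedded and $\pi_1$-injective: totally geodesic hypersurfaces in hyperbolic manifolds are always $\pi_1$-injective. Each such hypersurface $F_j$ is two-sided (or can be made so in a further cover) and nonseparating when its dual class $\phi_j$ is nonzero. We may rescale each $\phi_j$ to be primitive; if $F_j$ is dual to a nonprimitive class, we replace it by a component or pass to the dual of the primitive class, with some care about multiple parallel components. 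Each $F_j$ is a closed hyperbolic $2n$-manifold, so the Singer conjecture holds for it, since it is known for hyperbolic manifolds: $b_i^{(2)}(\widetilde F_j)=0$ for $i\ne n$. Proposition~\ref{prop:mayer-vietoris} then gives $b_i^{(2)}(\widetilde M;\phi_j)=0$ for all $i\ne n$ and all $j$.

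Finally, a seminorm vanishing on a spanning set of vectors vanishes identically. Indeed, its zero set is a linear subspace, because seminorms satisfy the triangle inequality and are homogeneous. Hence $b_i^{(2)}(\widetilde M;-)\equiv 0$ for $i\ne n$, first on rational classes and then on all of $\Hom(\pi_1 M,\RR)$ by continuity. Multiplicativity then transfers the vanishing back to the original $M$.

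The main obstacle is the step from the immersed, possibly nonprimitive Poincaré duals to embedded, two-sided, $\pi_1$-injective hypersurfaces dual to primitive classes, uniformly across a spanning set in one common finite cover. I would handle this with the separability of geodesic subgroups in congruence lattices to embed each hypersurface, using finitely many hypersurfaces and one common cover. For a class dual to $d$ parallel copies, I would note that a single nonseparating component already gives a class in the same rational line, or adapt the Mayer--Vietoris argument to a disjoint union of copies. Either way, the bound gives vanishing along a line spanned by each $\phi_j$, which is all the seminorm argument needs.
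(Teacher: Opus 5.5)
Your overall route is the paper's: Proposition~\ref{prop:bmm} on $M$, separability to embed the hypersurfaces in a finite cover, $\pi_1$-injectivity of totally geodesic hypersurfaces, and vanishing of $L^2$-Betti numbers of closed hyperbolic $2n$-manifolds outside degree $n$. Then Proposition~\ref{prop:mayer-vietoris}, the fact that the zero set of the seminorm of Theorem~\ref{thm:arithm-seminorm} is a linear subspace, and multiplicativity under finite covers. Using one common cover rather than one cover per $F_j$ makes no difference. Your separate treatment of $2n+1=3$ is a real addition, since Proposition~\ref{prop:bmm} needs $\dim M>3$. There you should add one step: the lemma only covers classes that become fibered in a finite cover. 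So argue that in a fibered finite cover the seminorm vanishes on the open fibered cone, hence identically, and then use multiplicativity.

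The step you single out as the main obstacle is not closed by either of your suggestions. For a finite cover $p\colon M'\to M$, multiplicativity reads $b_i^{(2)}(\widetilde{M'};p^*\phi)=[M':M]\cdot b_i^{(2)}(\widetilde M;\phi)$. So you need vanishing along $p^*\phi_j$, where $\phi_j$ is dual to $F_j$ in $M$. Note that Proposition~\ref{prop:bmm} must be applied to $M$ itself; applied to the cover, it only produces immersed hypersurfaces again. But $p^*\phi_j=\sum_c[F'_{j,c}]$, summed over \emph{all} lifts $F'_{j,c}$ of $F_j$, and these are not parallel copies. Their classes can be linearly independent in $H^1(M';\mathbb Q)$; this already happens for an embedded $F_j$ lifting to two disjoint copies in a suitable double cover. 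When $F_j$ is only immersed, distinct lifts may also cross each other. So a single component does not give a class on the line of $p^*\phi_j$, and a Mayer--Vietoris argument for disjoint parallel copies does not apply.

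The repair is short. Take $M'$ \emph{normal} over $M$, e.g.\ a common normal finite cover dominating, for each $j$, a cover in which some lift of $F_j$ embeds. Then some lift of each $F_j$ to $M'$ is embedded, being a component of the preimage of an embedded lift. The deck group permutes the lifts transitively, so all of them are embedded. Each lift is connected, two-sided and $\pi_1$-injective, hence dual to $0$ or to a primitive class, and Proposition~\ref{prop:mayer-vietoris} gives vanishing along every $[F'_{j,c}]$. Since the zero set of a seminorm is a subspace, $b_i^{(2)}(\widetilde{M'};-)$ vanishes on each $p^*\phi_j$, hence on $p^*H^1(M;\mathbb Q)$. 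That subspace, rather than all of $H^1(M';\mathbb Q)$, is what you actually obtain and need, and multiplicativity concludes.

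The paper's proof is equally terse here: it applies multiplicativity directly to the class of one embedded lift $\widehat F_j$, which in general is not $p^*[F_j]$. The normal-cover argument above is what makes that step rigorous.
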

\begin{proof}
    As in the proof of Theorem~\ref{thm:arithm-seminorm}, $\pi_1(M)$ satisfies the Atiyah conjecture. Moreover, up to a double cover, we may assume $M$ orientable.
    Let $F_1, \dots, F_k$ be immersed, totally geodesic hypersurfaces in $M$, which constitute a basis of $H^1(M; \mathbb Q)$ up to Poincaré duality. For each $j$, by a separability argument, we can find a finite cover $\widehat M$ containing an embedded lift $\widehat F_j$. Since a totally geodesic submanifold is $\pi_1$-injective, by Proposition~\ref{prop:mayer-vietoris}, we can bound the twisted $L^2$-Betti numbers of $\widehat M$ with the $L^2$-Betti numbers of $\widehat F_j$.

    Since $\widehat F_j$ is closed hyperbolic, it satisfies the Singer conjecture. Therefore, its $L^2$-Betti numbers vanish in dimension $\ne n$, and so do the twisted $L^2$-Betti numbers of $\widehat M$ along the class $[\widehat F_j]$. By multiplicativity along finite covers, we also have $b_i^{(2)}(\widetilde M; [F_j]) = 0$ for $i \ne n$. The seminorm $b_i^{(2)}(\widetilde M; -)$ vanishes on a basis, and hence is identically zero.
\end{proof}

By combining Theorems~\ref{thm:arithm-seminorm} and~\ref{thm:twisted-singer}, 
we obtain our main theorem:

\begin{namedthm*}{Theorem~\ref{thm:main-final}}
    Let $M$ be a closed congruence arithmetic hyperbolic $(2n+1)$-manifold of simplest type. Then $(-1)^n \cdot \chi^{(2)}(\widetilde M; -) = b_n^{(2)}(\widetilde M; -)$ is a seminorm, and the other twisted $L^2$-Betti numbers $b_i^{(2)}(\widetilde M; -)$, with $i\ne n$, are identically zero.
\end{namedthm*}

\begin{remark}
    The requirement that the manifolds in Proposition~\ref{prop:bmm} be specifically \emph{congruence} arithmetic manifolds stems from the number-theoretic nature of its proof, and might not be fundamentally necessary.
\end{remark}

\begin{remark}
    Given the hypotheses of Theorem~\ref{thm:main-final}, define $\Sigma(M)$ to be the set of immersed totally geodesic hypersurfaces of $M$. It is natural to define a seminorm
    \begin{equation}
        s(\widetilde M; \phi) = \inf \left\{\sum_{i = 1}^k |\alpha_i|\cdot |\chi(S_i)| \biggm| \phi = \sum_{i=1}^k \alpha_i [S_i], S_i \in \Sigma(M)\right\},
    \end{equation}
    and to ask whether there is any relation between $s(\widetilde M; -)$ and $b_n^{(2)}(\widetilde M; -)$. A possible experimental approach is to compute middle dimensional $L^2$-Betti numbers of geodesic hypersurfaces, and hence the seminorm $s(\widetilde M; -)$, via number-theoretic formulas, and to compare with the results of the algorithm in~\cite{twi-l2} for the twisted $L^2$-Euler characteristic.
\end{remark}

In the above discussion, the significance of totally geodesic hypersurfaces is that they are $\pi_1$-injective and aspherical (in other words, they are injective in all homotopy groups), and they satisfy the Singer conjecture. More generally, there are at least two classes of immersed hypersurfaces that are $\pi_1$-injective and aspherical: virtual fibers of fibrations, and hypersurfaces with principal curvatures in $[-1, 1]$. If such hypersurfaces span the first cohomology of a $(2n+1)$-manifold $M$, then Theorem~\ref{thm:main-final} will also hold for $M$, conditional on the Atiyah and Singer conjectures.

As mentioned before, in dimension $>3$, realizing a cohomology class by either a $\pi_1$-injective hypersurface or an aspherical one is a highly nontrivial problem, which does not seem to appear in the literature. In some sense, we expect an abundance of hypersurfaces of small curvature: see e.g. Kahn--Markovic~\cite{kahn-markovic} for $3$-manifolds, and Han--Jiang~\cite{han-jiang} in dimension $\ge 3$ for arithmetic manifolds of simplest type. However, these results do not obviously give any control on the cohomology classes these surfaces realize. In another direction, there is also reason to believe that some form of virtual fibration holds in high dimensions: see a recent survey by Kielak~\cite{kielak-fiber}, which outlines possible proof strategies. This problem appears to be very difficult, and is deeply connected with $L^2$-invariants. It would be interesting to pursue further research along these lines.
\printbibliography

@article{funke-am,
    author = {F. Funke},
    title = {The $L^2$-torsion polytope of amenable groups},
    journal = {Doc. Math.},
    issue = {23},
    date = {2018},
    pages = {1969--1993},
}

@misc{l2thur,
  title = {$L^2$‐Euler characteristics and the Thurston norm},
  volume = {118},
  DOI = {10.1112/plms.12202},
  number = {4},
  journal = {Proc. Lond. Math. Soc.},
  publisher = {Wiley},
  author = {Friedl,  Stefan and L\"{u}ck,  Wolfgang},
  date = {2018-09},
  pages = {857--900}
}

@article{twi-l2,
    author = {J. G. Chen},
    title = {Computing the twisted $L^2$-Euler characteristic},
    journal = {Groups Geom. Dyn.},
    date = {2025},
    howpublished = {published online first}
}

@article{kielak-bns,
    author = {D. Kielak},
    title = {The Bieri--Neumann--Strebel invariants via Newton polytopes},
    journal = {Invent. Math.},
    date = {2020},
    number = {219},
    pages = {1009--1068},
}

@misc{kapovich,
author = {M. Kapovich},
title = {Integer norms are polyhedral},
url = {https://www.math.ucdavis.edu/~kapovich/EPR/norms.pdf}
}

@article{thurston,
  title={A norm for the homology of 3-manifolds},
  author={W. P. Thurston},
  journal={Memoirs of the American Mathematical Society},
  year={1986},
  volume={59},
  pages={99--130},
}

@misc{virt-geod,
      title={Hodge type theorems for arithmetic manifolds associated to orthogonal groups}, 
      author={N. Bergeron and J. Millson and C. Moeglin},
      year={2015},
      eprint={1110.3049},
      archivePrefix={arXiv},
      primaryClass={math.NT},
     % url={https://arxiv.org/abs/1110.3049}, 
}

@inproceedings{seminorm-conj,
    author = {S. Friedl and W. Lück and S. Tillmann},
    title = {Groups and polytopes},
    booktitle = {Breadth in contemporary topology},
    volume = {102},
    pages = {57--77},
    year = {2019},
    publisher = {Amer. Math. Soc.},
    series = {Proc. Sympos. Pure Math.}
}

@misc{jzksp,
      title={Thurston norm, polytopes and splitting complexity}, 
      author={A. Jaikin-Zapirain and M. Kudlinska and P. Sánchez-Peralta},
      year={2026},
      eprint={2606.31774},
      archivePrefix={arXiv},
      primaryClass={math.GR},
      %url={https://arxiv.org/abs/2606.31774}, 
}

@article{kahn-markovic,
  title = {Immersing almost geodesic surfaces in a closed hyperbolic three manifold},
  volume = {175},
  ISSN = {0003-486X},
  DOI = {10.4007/annals.2012.175.3.4},
  number = {3},
  journal = {Annals of Mathematics},
  author = {Kahn,  J. and Markovic,  V.},
  date = {2012-05},
  pages = {1127–-1190}
}

@misc{han-jiang,
      title={Asymptotically geodesic hypersurfaces and the fundamental groups of hyperbolic manifolds}, 
      author={X. H. Han and R. Jiang},
      year={2026},
      eprint={2603.24869},
      archivePrefix={arXiv},
      primaryClass={math.GT},
     % url={https://arxiv.org/abs/2603.24869}, 
}

@misc{kielak-fiber,
      title={Virtual fibring of manifolds and groups}, 
      author={D. Kielak},
      year={2025},
      eprint={2510.01805},
      archivePrefix={arXiv},
      primaryClass={math.GR},
}

@book{l2,
    author={W. Lück},
    title={$L^2$-Invariants: Theory and Applications to Geometry and K-Theory},
    year={2002},
    publisher={Springer},
    %series = {Ergeb. Math. Grenzgeb., 3. Folge},
    %number = {44}
}

@book{cohn,
    author = {P. M. Cohn.},
    title = {Skew fields. Theory of general division rings},
    publisher = {Cambridge University Press},
    year = {1995},
    series = {Encyclopedia of Mathematics and its Applications},
    number = {57},
}

@article{funke,
  title = {The integral polytope group},
  volume = {21},
  ISSN = {1615-715X},
  DOI = {10.1515/advgeom-2019-0029},
  number = {1},
  journal = {Advances in Geometry},
  publisher = {Walter de Gruyter GmbH},
  author = {Funke,  F.},
  date = {2019-09},
  pages = {45--62}
}

@book{ore-flat,
    author = {B. Stenström},
    title = {Rings of Quotients},
    publisher = {Springer-Verlag},
    year = {1975} 
}

@article{virt-spec,
  title = {The strong Atiyah conjecture for virtually cocompact special groups},
  volume = {359},
  DOI = {10.1007/s00208-014-1007-9},
  number = {3-4},
  journal = {Mathematische Annalen},
  publisher = {Springer Science and Business Media LLC},
  author = {Schreve, K.},
  date = {2014-02},
  pages = {629--636}
}

@article{arith-virt-spec,
  title = {Hyperplane sections in arithmetic hyperbolic manifolds},
  volume = {83},
  DOI = {10.1112/jlms/jdq082},
  number = {2},
  journal = {J. Lond. Math. Soc.},
  publisher = {Wiley},
  author = {Bergeron, N. and Haglund, F. and Wise, D. T.},
  date = {2011-02},
  pages = {431--448}
}

\end{document}